\theoremstyle{plain}
\newtheorem{teo}{Theorem}[section]
\newtheorem{coro}[teo]{Corollary}
\newtheorem{lemma}[teo]{Lemma}
\newtheorem{pro}[teo]{Proposition}
\newtheorem{defi}[teo]{Definition}
\newtheorem{rem}[teo]{Remark}
\renewcommand{\d}{\operatorname{d}}
\newcommand{\B}{\mathbb{B}}
\newcommand{\N}{\mathbb{N}}
\newcommand{\R}{\mathbb{R}}
\def\UT{\mbox{\begin{picture}(7,10)
		\put(6,0){\line(-1,2){5}}
			\put(6,0){\line(0,1){10}}
			\put(1.1,10){\line(1,0){5.1}}
		\end{picture}
}}
\def\LT{\mbox{\begin{picture}(7,10)
			\put(.9,0){\line(1,0){5}}
			\put(6,0){\line(-1,2){5}}
			\put(1,0){\line(0,1){10}}
		\end{picture}
}}
\DeclareRobustCommand{\gaussk}{\DOTSB\gaussk@\slimits@}
\newcommand{\gaussk@}{\mathop{\vphantom{\sum}\mathpalette\bigcal@{K}}}
\newcommand{\bigcal@}[2]{%
	\vcenter{\m@th
		\sbox\z@{$#1\sum$}%
		\dimen@=\dimexpr\ht\z@+\dp\z@
		\hbox{\resizebox{!}{0.8\dimen@}{$\mathcal{K}$}}%
	}%
}
\newcommand{\cfracplus}{\mathbin{\cfracplus@}}
\newcommand{\cfracplus@}{%
	\sbox\z@{$\dfrac{1}{1}$}%
	\sbox\tw@{$+$}%
	\raisebox{\dimexpr\dp\tw@-\dp\z@\relax}{$+$}%
}
\newcommand{\cfracdots}{\mathord{\cfracdots@}}
\newcommand{\cfracdots@}{%
	\sbox\z@{$\dfrac{1}{1}$}%
	\sbox\tw@{$+$}%
	\raisebox{\dimexpr\dp\tw@-\dp\z@\relax}{$\cdots$}%
}
\newcommand*{\relrelbarsep}{.386ex}
\newcommand*{\relrelbar}{%
	\mathrel{%
		\mathpalette\@relrelbar\relrelbarsep
	}%
}
\newcommand*{\@relrelbar}[2]{%
	\raise#2\hbox to 0pt{$\m@th#1\relbar$\hss}%
	\lower#2\hbox{$\m@th#1\relbar$}%
}
\providecommand*{\rightrightarrowsfill@}{%
	\arrowfill@\relrelbar\relrelbar\rightrightarrows
}
\providecommand*{\leftleftarrowsfill@}{%
	\arrowfill@\leftleftarrows\relrelbar\relrelbar
}
\providecommand*{\xrightrightarrows}[2][]{%
	\ext@arrow 0359\rightrightarrowsfill@{#1}{#2}%
}
\providecommand*{\xleftleftarrows}[2][]{%
	\ext@arrow 3095\leftleftarrowsfill@{#1}{#2}%
}
\tikzstyle{block} = [draw, rectangle, 
\begin{document}
	\title[Banded totally positive matrices and multiple orthogonality]{Banded totally positive matrices and \\
	 normality for mixed  multiple orthogonal polynomials}
	\author[A Branquinho]{Amílcar Branquinho$^{1}$}
	\address{$^1$CMUC, Departamento de Matemática,
		Universidade de Coimbra, 3001-454 Coimbra, Portugal}
	\email{$^1$ajplb@mat.uc.pt}

	\author[A Foulquié]{Ana Foulquié-Moreno$^{2}$}
	\address{$^2$CIDMA, Departamento de Matemática, Universidade de Aveiro, 3810-193 Aveiro, Portugal}
	\email{$^2$foulquie@ua.pt}

	\author[M Mañas]{Manuel Mañas$^{3}$}
	\address{$^3$Departamento de Física Teórica, Universidad Complutense de Madrid, Plaza Ciencias 1, 28040-Madrid, Spain 
	}
	\email{$^3$manuel.manas@ucm.es}
	
	\keywords{Bounded banded matrices,  banded totally positive  matrices, mixed multiple orthogonal polynomials, Favard spectral representation, positive bidiagonal factorization}

\subjclass{42C05, 33C45, 33C47, 47B39, 47B36}

\begin{abstract}
This paper serves as an introduction to banded totally positive matrices, exploring various characterizations and associated properties. A significant result within is the demonstration that the collection of such matrices forms a semigroup, notably including a subset permitting positive bidiagonal factorization. Moreover, the paper applies this concept to investigate step line  normality concerning the degrees of associated recursion polynomials. It presents a spectral Favard theorem, ensuring the existence of measures, thereby guaranteeing that these recursion polynomials represent mixed multiple orthogonal polynomials that maintain normality on the step line  indices.
\end{abstract}
	
	\maketitle

\section{Introduction}

In the papers \cite{BFM1,BFM2,BFM3}, we delve into the analysis of bounded banded semi-infinite matrices, termed as PBF matrices, which possess factorizations involving positive bidiagonal matrices. This investigation is conducted in connection with the theory of mixed multiple orthogonal polynomials. Within this context, we establish a Favard spectral theorem, ensuring the existence of measures for mixed multiple orthogonality, wherein the recurrence matrix aligns with the structure of the banded matrix. As a result, we derive explicit degrees of precision for a mixed multiple version of Gauss quadrature.

Within the theory of multiple orthogonal polynomials, cf. \cite{andrei_walter,nikishin_sorokin, Ismail,afm},  the concept of index normality holds significance, ensuring the attainment of maximum degrees for the orthogonal polynomials. A notable subset within this framework is the step line , representing the specific polynomials that emerge as recursion polynomials.

However, in \cite{BFM1,BFM2}, a mistaken claim regarding the presence of step line  normality was made. Although partially accurate, this error did not compromise the validity of other results, notably the spectral Favard theorem and the Gauss quadrature formulas.

To address the determination of step line  normality for PBF mixed multiple orthogonal polynomials, we initiate a study of a novel set of matrices: those that are banded and feature strictly positive nontrivial minors. While triangular totally positive matrices have been subject to prior investigation, analysis on such banded matrices remains largely unexplored but for \cite{Price,Metelmann}. We explore several properties of these matrices and apply them to investigate step line  normality for the corresponding mixed multiple orthogonal polynomials.
	
\subsection{Preliminaries on matrix analysis}
Let's establish some standard notation for matrix analysis. Consider an \( n \times n \) real matrix \( A \) and a sequence of \( r \) columns \( \boldsymbol{j} = \{j_1, \dots, j_r\} \) and a sequence of \(r\) rows  \( \boldsymbol{i} = \{i_1, \dots, i_r\} \), where both are increasing sequences of numbers in \( \{1, \dots, n\} \). We define the submatrix
\[
A[\boldsymbol{i}; \boldsymbol{j}] \equiv A\begin{bNiceMatrix}
	i_1 & \Cdots & i_r\\
	j_1 & \Cdots & j_r
\end{bNiceMatrix}
\]
where the entries are the elements found at the intersection of the corresponding rows and columns. Minors are the determinants of these submatrices:
\[
A(\boldsymbol{i}; \boldsymbol{j}) \equiv A\begin{pNiceMatrix}
	i_1 & \Cdots & i_r\\
	j_1 & \Cdots & j_r
\end{pNiceMatrix} \coloneq \det A\begin{bNiceMatrix}
	i_1 & \Cdots & i_r\\
	j_1 & \Cdots & j_r
\end{bNiceMatrix}.
\]
Submatrices and minors are termed proper if not all rows and columns are included.
	
Principal submatrices and minors are obtained by selecting the same set of column labels as row labels:
	\begin{align*}
	\begin{aligned}
	A[\boldsymbol i]	\coloneq	A[\boldsymbol i; \boldsymbol i]		&=A\begin{bNiceMatrix}
		i_1 &\Cdots & i_r\\
		i_1&\Cdots & i_r
	\end{bNiceMatrix}, & 	A(\boldsymbol i)	\coloneq	A(\boldsymbol i; \boldsymbol i)	&=A\begin{pNiceMatrix}
	i_1 &\Cdots & i_r\\
	i_1&\Cdots & i_r
	\end{pNiceMatrix}.
	\end{aligned}
	\end{align*} 
	
In the set \( \mathscr{Q}(r,n) \) of increasing sequences of \( r \) numbers within \( \{1,\dots,n\} \), the dispersion of an element \( \boldsymbol{i} \in \mathscr{Q}(r,n) \) is defined as 
\[
d(\boldsymbol{i}) = \sum_{k=1}^{r-1}(i_{k+1}-i_k-1) = i_r - i_1 - (r-1).
\]
Elements \( \boldsymbol{i} \in \mathscr{Q}(r,n) \) with zero dispersion, denoted by \( d(\boldsymbol{i}) = 0 \), are also referred to as contiguous because they take the form \( \boldsymbol{i} = \{i,i+1,\dots,i+r-1\} \). The set of increasing sequences of labels \( \mathscr{Q}(r,n) \) is partially ordered such that \( \boldsymbol{i} \leq \boldsymbol{j} \) if and only if \( i_k \leq j_k \) for \( k \in \{1,\dots,r\} \). The infimum element in this lattice is \( \{1,2,\dots,r\} \).

Contiguous submatrices and minors correspond to contiguous sequences (zero dispersion) and take the form:
\[
\begin{aligned}
	&A\begin{bNiceMatrix}
		i & i+1 & \Cdots & i+r-1\\
		j & j+1 & \Cdots & j+r-1
	\end{bNiceMatrix}, &A\begin{pNiceMatrix}
		i & i+1 & \Cdots & i+r-1\\
		j & j+1 & \Cdots & j+r-1
	\end{pNiceMatrix}.
\end{aligned}
\]
Particular cases of contiguous  submatrices and minors 
 those containing the first rows or columns associated with the infimum of the lattice of submatrix labels. These submatrices are known as initial submatrices.

The right and left shadows of a contiguous submatrix
\[
A\begin{bNiceMatrix}
	i & i+1 & \Cdots & i+r-1\\
	j & j+1 & \Cdots & j+r-1
\end{bNiceMatrix},
\]
of a matrix \( A \in \mathbb{R}^{n \times n} \), are given by the initial submatrices:
\[
\begin{aligned}
	&A\begin{bNiceMatrix}
		1 & \Cdots & i+r-1\\
		j & \Cdots & n
	\end{bNiceMatrix}, &A\begin{bNiceMatrix}
		i & \Cdots & n\\
		1 & \Cdots & j+r-1
	\end{bNiceMatrix}.
\end{aligned}
\]

If a matrix has all its minors nonnegative, it is termed totally nonnegative (TN); if the minors are positive (greater than zero), it is termed totally positive (TP). The sets of such matrices are denoted by TN and TP, respectively. The set of nonsingular matrices in TN is denoted by InTN. Recall that a matrix $M$ is reducible if and only if there exists a permutation matrix $P$ such that $PMP^\top = \begin{bNiceMatrix}[small]
	A & B \\
	0 & D
\end{bNiceMatrix}$, where $A$ and $D$ are square blocks and $B$ is the off-diagonal block in the upper right corner, while the lower left corner contains a zero block.  The set of irreducible nonsingular totally nonnegative matrices is denoted by IITN, cf. \cite{Fallat}, and is also known as oscillatory matrices. Oscillatory matrices $M$ are TN matrices such that for  some $n$ the matrix $M^n$ is TP.
For comprehensive discussions on this subject, refer to the books by Pinkus \cite{Pinkus book} and Fallat \cite{Fallat}, or consult Gantmacher's classical work \cite{Gantmacher}. We also recommend the nice and short review \cite{Fallat_0}.

A property we will later employ is the following classical statement, often referred to as Fischer's inequality because he first proved it for positive semidefinite matrices \cite[Theorem 7.8]{Horn}:

\begin{teo}[Fischer's inequality]\label{teo:Classical}
	For an \( n\times n \) TN matrix \( A \) and \( p<n \), it holds that
	\[
	A\begin{pNiceMatrix}
		1 & \Cdots & n\\
		1 & \Cdots & n
	\end{pNiceMatrix} \leq A\begin{pNiceMatrix}
		1 & \Cdots & p\\
		1 & \Cdots & p
	\end{pNiceMatrix} A\begin{pNiceMatrix}
		p+1 & \Cdots & n\\
		p+1 & \Cdots & n
	\end{pNiceMatrix}.
	\]
\end{teo}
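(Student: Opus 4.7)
Fischer's inequality is the special case $I=\{1,\ldots,p\}$, $J=\{p+1,\ldots,n\}$ (under the convention $\det A[\emptyset]=1$) of the more symmetric \emph{Koteljanskii inequality}
\[
\det A[I\cup J]\,\det A[I\cap J]\;\leq\;\det A[I]\,\det A[J],\qquad I,J\subseteq\{1,\ldots,n\},
\]
which holds for every TN matrix $A$. I would prove this stronger inequality, from which the theorem follows by specialization.

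The argument has two ingredients. First, by density of TP in TN (for any TN matrix $A$, the perturbation $A+\varepsilon T$ with $T$ a fixed TP matrix --- for instance a Cauchy matrix --- is TP for every $\varepsilon>0$ and tends to $A$ as $\varepsilon\to 0^{+}$), combined with the continuity of both sides of the Koteljanskii inequality in the entries of $A$, it suffices to prove the inequality assuming $A$ is totally positive; in that case every principal minor appearing below is strictly positive, so any required divisions are legitimate. Second, for such $A$ I would argue by induction on $|I\triangle J|$. The cases $I\subseteq J$ and $J\subseteq I$ (in particular $|I\triangle J|=0$) are trivial equalities. Otherwise, pick $i\in I\setminus J$ and $j\in J\setminus I$; the Sylvester--Desnanot--Jacobi identity applied to the principal submatrix $A[I\cup J]$ with pivots at positions $i,j$, together with the nonnegativity of all minors of $A$, yields
\[
\det A[I\cup J]\,\det A[(I\cup J)\setminus\{i,j\}]\;\leq\;\det A[(I\cup J)\setminus\{i\}]\,\det A[(I\cup J)\setminus\{j\}].
\]
Combining this with the induction hypothesis applied to the three pairs $(I\setminus\{i\},J)$, $(I,J\setminus\{j\})$ and $(I\setminus\{i\},J\setminus\{j\})$ --- each of which has strictly smaller symmetric difference than $(I,J)$ --- and dividing by the positive quantity $\det A[I\cap J]\,\det A[(I\cup J)\setminus\{i,j\}]$ closes the induction and produces Koteljanskii's inequality for $(I,J)$.

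The main obstacle is the algebraic bookkeeping required to make the three induction instances telescope with the Sylvester inequality so that, after the appropriate multiplications, the common positive factors cancel to leave exactly the desired bound and no spurious ones. Once this careful rearrangement is carried out in the TP case, the extension to TN by continuity is routine, and the stated Fischer inequality is the immediate specialization of Koteljanskii to complementary principal index sets.
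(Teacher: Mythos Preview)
The paper does not prove Fischer's inequality; it is stated in the preliminaries as a classical result with a pointer to \cite[Theorem~7.8]{Horn}. There is therefore no argument in the paper to compare yours against.

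On its own merits, your strategy---deduce Fischer from Koteljanskii, pass to TP by density, prove Koteljanskii for TP via Sylvester and induction on $|I\triangle J|$---is a valid and standard route, but two steps are not correct as written. First, the perturbation $A+\varepsilon T$ with $T$ TP is in general \emph{not} TP: take the TN matrix $A=\left(\begin{smallmatrix}0&0\\1&0\end{smallmatrix}\right)$ and any $2\times2$ TP matrix $T$; then $\det(A+\varepsilon T)=\varepsilon^{2}\det T-\varepsilon\,T_{12}<0$ for small $\varepsilon>0$. The density of TP in TN is true (Whitney), but needs a different construction. Second, and more substantively, your three induction instances do not telescope. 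With $U=I\cup J$, $K=I\cap J$ and $[\,\cdot\,]\coloneq\det A[\,\cdot\,]$, combining the Sylvester inequality with the hypotheses for $(I\setminus\{i\},J)$ and $(I,J\setminus\{j\})$ and cancelling positive factors gives
\[
[U]\,[U\setminus\{i,j\}]\,[K]^{2}\;\leq\;[I]\,[J]\,[I\setminus\{i\}]\,[J\setminus\{j\}],
\]
so reaching $[U]\,[K]\le[I]\,[J]$ would require $[I\setminus\{i\}]\,[J\setminus\{j\}]\le[U\setminus\{i,j\}]\,[K]$, which is the \emph{reverse} of your third hypothesis for $(I\setminus\{i\},J\setminus\{j\})$. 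This is a genuine wrong direction, not bookkeeping. The clean fix is to note that the Sylvester inequality says exactly that $S\mapsto\log\det A[S]$ has nonpositive second differences; the passage from this local condition to submodularity (i.e.\ Koteljanskii) is a one-sided telescoping that uses a \emph{single} smaller instance per step. Concretely, after Sylvester and the hypothesis for $(I\setminus\{i\},J)$, the residual inequality $[I\setminus\{i\}]\,[U\setminus\{j\}]\le[I]\,[U\setminus\{i,j\}]$ is Koteljanskii for the pair $\bigl(I,\,U\setminus\{i,j\}\bigr)$, which has strictly smaller symmetric difference than $(I,J)$ but is not among your three pairs.
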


Another significant result is Michel Fekete's criterion \cite{Fekete}.

\begin{teo}[Fekete, 1913]
	A matrix \( A \in \R^{n\times n} \) is TP if and only if \( A(\boldsymbol{i};\boldsymbol{j}) > 0 \) for all contiguous sequences; i.e., \( d(\boldsymbol{i}) = d(\boldsymbol{j}) = 0 \).
\end{teo}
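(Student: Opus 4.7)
The only-if direction is vacuous: every contiguous minor is a minor, so if $A$ is TP then in particular every contiguous minor is positive. For the converse I would induct on the total dispersion $D(\boldsymbol{i},\boldsymbol{j})\coloneq d(\boldsymbol{i})+d(\boldsymbol{j})$, with the base case $D=0$ supplied exactly by the hypothesis. The size $r$ of the minor can be handled by a secondary induction in case the dispersion argument needs $(r-2)$-minors to already be known positive.

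For the inductive step, fix $A(\boldsymbol{i};\boldsymbol{j})$ with $D(\boldsymbol{i},\boldsymbol{j})\geq 1$ and assume $A(\boldsymbol{k};\boldsymbol{l})>0$ for every pair with $D(\boldsymbol{k},\boldsymbol{l})<D(\boldsymbol{i},\boldsymbol{j})$ (and, if needed, for every smaller size). Without loss of generality $d(\boldsymbol{i})\geq 1$, so there is an index $k$ with $i_{k+1}\geq i_k+2$; pick an intermediate integer $m$ with $i_k<m<i_{k+1}$. The main tool is Sylvester's determinantal identity in its two-point bordered form: for any $(r-2)$-subsequences $\boldsymbol{p},\boldsymbol{q}$ and distinct bordering indices $a<b$ and $c<d$,
\[
A(\boldsymbol{p}\cup\{a,b\};\boldsymbol{q}\cup\{c,d\})\cdot A(\boldsymbol{p};\boldsymbol{q}) = A(\boldsymbol{p}\cup\{a\};\boldsymbol{q}\cup\{c\})\cdot A(\boldsymbol{p}\cup\{b\};\boldsymbol{q}\cup\{d\}) - A(\boldsymbol{p}\cup\{a\};\boldsymbol{q}\cup\{d\})\cdot A(\boldsymbol{p}\cup\{b\};\boldsymbol{q}\cup\{c\}).
\]
Choosing $\{a,b\}$ to straddle the gap (say $\{a,b\}=\{i_k,i_{k+1}\}$) and isolating the term involving $m$ yields a chain of identities that rewrite $A(\boldsymbol{i};\boldsymbol{j})$, after dividing by $A(\boldsymbol{p};\boldsymbol{q})$, in terms of minors whose row sequence has $i_{k+1}$ or $i_k$ replaced by $m=i_k+1$. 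Each such new row sequence has strictly smaller dispersion, so the inductive hypothesis renders every minor produced by the reduction positive.

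The principal obstacle is the minus sign: a priori the right-hand side is a difference of two positive quantities, and nothing in the inductive hypothesis orders them. I would overcome this by following the classical Gantmacher--Kre\u in iterative strategy, choosing $m=i_k+1$ adjacent to the lower endpoint of the gap and shrinking one gap by one unit at a time, alternating between a row-gap and a column-gap reduction. In this stepwise form one identifies a contiguous minor that appears as a common factor in both products on the right-hand side; after cancellation the remaining expression for $A(\boldsymbol{i};\boldsymbol{j})$ becomes a ratio of products of contiguous minors with manifestly positive sign. Iterating drives $D(\boldsymbol{i},\boldsymbol{j})$ strictly down at every step, reaches the base $D=0$ in finitely many steps, and closes the induction.
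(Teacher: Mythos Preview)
The paper does not prove Fekete's theorem; it is quoted in the preliminaries as a classical 1913 result with a citation, alongside Fischer's inequality, Cryer's criterion, etc. So there is no ``paper's proof'' to compare against here, and your task reduces to whether the sketch stands on its own.

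Your outline---double induction, with Sylvester's identity as the engine---is indeed the classical Gantmacher--Kre\u{\i}n/Karlin route, and you have correctly isolated the only genuine obstacle: the minus sign on the right-hand side of
\[
A(\boldsymbol{p}\cup\{a,b\};\boldsymbol{q}\cup\{c,d\})\,A(\boldsymbol{p};\boldsymbol{q})
= A(\boldsymbol{p}\cup\{a\};\boldsymbol{q}\cup\{c\})\,A(\boldsymbol{p}\cup\{b\};\boldsymbol{q}\cup\{d\})
- A(\boldsymbol{p}\cup\{a\};\boldsymbol{q}\cup\{d\})\,A(\boldsymbol{p}\cup\{b\};\boldsymbol{q}\cup\{c\}).
\]
However, your proposed resolution of that sign is not correct. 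You assert that after choosing $m=i_k+1$ and ``alternating between a row-gap and a column-gap reduction'' one finds ``a contiguous minor that appears as a common factor in both products on the right-hand side,'' and that cancelling it leaves a manifestly positive ratio. Look at the identity: the four $(r-1)$-minors on the right are pairwise distinct and share no factor, contiguous or otherwise; there is nothing to cancel. Iterating the identity does not create such a common factor either---each application produces four new minors, and the subtracted product remains a genuine competitor to the added one.

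The classical proof handles the sign differently. One runs the \emph{outer} induction on the order $r$, so that every minor of order $\le r-1$ is already known to be positive. For the inner step one does not use the four-term Sylvester identity with $\{a,b\}=\{i_k,i_{k+1}\}$; instead one inserts the gap-filler $m$ as an extra row and exploits an identity coming from a determinant with a repeated column (equivalently a Laplace/Pl\"ucker relation on the $(r{+}1)\times r$ block $A[\boldsymbol{i}\cup\{m\};\boldsymbol{j}]$). In the base case $r=2$ this reads
\[
A_{m,j_1}\,A(i_1,i_2;j_1,j_2)=A_{i_1,j_1}\,A(m,i_2;j_1,j_2)+A_{i_2,j_1}\,A(i_1,m;j_1,j_2),
\]
which has \emph{only plus signs}; both $2\times2$ minors on the right have strictly smaller row dispersion, and the entries are positive by the $r=1$ case. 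For higher $r$ one combines this mechanism with the outer induction (all smaller minors already positive) so that the quantity multiplying the target is positive and the right-hand side is a positive combination of lower-dispersion minors. The point is that the specific identity and the specific pairing of ``extra'' rows/columns are what make the signs come out right; a generic application of the four-term Sylvester identity, as in your sketch, does not.
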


An extension to TN matrices due to Colin Cryer was found in \cite{Cryer2}, see also \cite[Theorem 2.1]{Ando}, which, when the matrix is invertible, leads to the Fekete--Cryer criterion:

\begin{teo}[Cryer, 1976]\label{teo:Fekete-Ando}
	A nonsingular matrix \( A \in \R^{n\times n} \) is TN if and only if \( A(\boldsymbol{i};\boldsymbol{j}) \geq 0 \) for \( \boldsymbol{i},\boldsymbol{j} \in \mathscr{Q}(r,n) \) such that \( d(\boldsymbol{j}) = 0 \).
\end{teo}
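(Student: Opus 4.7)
\emph{Plan.} The forward direction is immediate, since every column-contiguous minor is a minor. For the converse, suppose $A$ is nonsingular with all column-contiguous minors nonnegative. I would argue by a density/perturbation strategy that reduces matters to Fekete's criterion.

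\emph{Step 1 (Approximation by TP matrices).} Choose a fixed strictly totally positive matrix $T$ (e.g.\ a Cauchy or generalized Vandermonde matrix), and consider $A_\epsilon \coloneq A+\epsilon T$ for $\epsilon>0$. The key claim is that for $\epsilon$ sufficiently small, \emph{every} column-contiguous minor of $A_\epsilon$ is strictly positive. A column-contiguous minor of $A_\epsilon$ is a polynomial in $\epsilon$ of degree $r=|\boldsymbol j|$ with nonnegative constant term $A(\boldsymbol i;\boldsymbol j)\geq 0$ and strictly positive leading coefficient $T(\boldsymbol i;\boldsymbol j)>0$. On those indices where $A(\boldsymbol i;\boldsymbol j)>0$ positivity for small $\epsilon$ is automatic; on the remaining indices one must analyse the intermediate coefficients and show that the polynomial is strictly positive on some right neighborhood of $0$. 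This requires a finer analysis invoking Sylvester's identity to express the intermediate coefficients recursively via minors of smaller order, together with the nonsingularity of $A$ to exclude pathological configurations.

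\emph{Step 2 (Fekete plus limit).} Once Step~1 is established, the minors of $A_\epsilon$ with $d(\boldsymbol i)=d(\boldsymbol j)=0$ are in particular strictly positive, since they form a subclass of the column-contiguous ones. Fekete's criterion stated above therefore gives that $A_\epsilon$ is TP, so \emph{every} minor of $A_\epsilon$ is positive. Letting $\epsilon\to 0^+$ yields that every minor of $A$ is nonnegative; that is, $A$ is TN.

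\emph{Main obstacle.} The crux of the argument lies in Step~1: controlling the intermediate $\epsilon$-coefficients of a column-contiguous minor of $A_\epsilon$. A direct cofactor expansion produces signed sums of products of $T$-entries with submatrices of $A$ whose column indices acquire gaps (hence fall outside the hypothesis), so the naive expansion need not be manifestly nonnegative. One needs a refined algebraic identity (Sylvester, Desnanot--Jacobi, or a Lindström--Gessel--Viennot type lattice-path representation) to reorganize these signed sums into nonnegative combinations of column-contiguous minors of $A$. The nonsingularity of $A$ is precisely what prevents the degenerate situation in which \emph{every} coefficient of the $\epsilon$-polynomial vanishes simultaneously, which would otherwise destroy the strict positivity.
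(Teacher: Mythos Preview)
The paper does not supply a proof of this theorem: it is listed among the preliminaries as a classical result, attributed to Cryer (1976) with a pointer to Ando's survey for the formulation. There is therefore no paper-proof to compare your attempt against, and the proposal must stand on its own.

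As written, it does not. Step~1 is not a technical detail but the entire content of the theorem, and you explicitly leave it open. The additive perturbation $A_\epsilon=A+\epsilon T$ does not cooperate with the column-contiguity hypothesis: expanding $\det A_\epsilon[\boldsymbol i;\boldsymbol j]$ by multilinearity in the rows produces mixed determinants, and Laplace-expanding those along the $T$-rows yields minors of $A$ whose column index sets acquire gaps---precisely the minors about which the hypothesis says nothing. Your appeal to Sylvester, Desnanot--Jacobi, or LGV identities to reorganize these signed sums into nonnegative combinations of column-contiguous minors is a hope, not an argument: no such identity is exhibited, and there is no evident structural reason one should exist for an \emph{additive} perturbation. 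Nonsingularity of $A$ prevents the $\epsilon$-polynomial from vanishing identically, but it does not prevent its lowest-order nonzero coefficient from being negative, which is what you would actually need to exclude.

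The proofs in the literature (Cryer, Ando) bypass this obstacle entirely: they argue by induction on the order of the minor using Sylvester's determinantal identity, which expresses a general $r\times r$ minor in terms of strictly smaller ones while keeping one of the index sets contiguous throughout the recursion. That inductive mechanism is the missing idea; the perturbation route you sketch does not supply it.
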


In 1996, Mariano Gasca and Juan Manuel Peña \cite{Gasca-Pena} provided the following important characterization:

\begin{teo}[Gasca--Peña, 1996]
	Let \( A \in \R^{n\times n} \) be nonsingular.
	\begin{enumerate}
		\item \( A \) is InTN if and only if for each \( r \in \{1,2, \dots, n\} \):
		\begin{enumerate}
			\item Principal leading minors are positive, \( A(\{1,2, \dots, r\}) > 0 \), and
			\item Initial column minors are nonnegative, i.e., \( A(\boldsymbol{i} ;\{1,2, \ldots, r\}) \geq 0 \),  \( \boldsymbol{i} \in \mathscr{Q}(r,n) \), and
			\item Initial row minors are nonnegative, i.e., \( A(\{1,2, \ldots, r\}; \boldsymbol{j}) \geq 0 \),   \( \boldsymbol{j} \in \mathscr{Q}(r,n) \).
		\end{enumerate}
		\item \( A \) is TP if and only if for each \( r \in \{1,2, \dots, n\} \):
		\begin{enumerate}
			\item Initial column  minors are positive, i.e., \( A(\boldsymbol{i} ;\{1,2, \ldots, r\}) > 0 \), \( \boldsymbol{i} \in \mathscr{Q}(r,n) \), \( d(\boldsymbol{i}) = 0 \), and
			\item Initial row  minors are positive, i.e., \( A(\{1,2, \ldots, r\} ; \boldsymbol{j}) > 0 \),  \( \boldsymbol{j} \in \mathscr{Q}(r,n) \), \( d(\boldsymbol{j}) = 0 \).
		\end{enumerate}
	\end{enumerate}
\end{teo}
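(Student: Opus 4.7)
Necessity in both parts follows directly from the definitions; for (1), the strict positivity of the leading principal minors is a consequence of combining nonnegativity of all minors with the nonsingularity of $A$ (alternatively from Fischer's inequality, Theorem \ref{teo:Classical}). For sufficiency, the plan is to combine a Fekete-type reduction --- which cuts the collection of minors that must be controlled down to the contiguous ones --- with Sylvester's determinantal identity (Desnanot--Jacobi), so as to propagate positivity (respectively nonnegativity) from initial minors to non-initial ones.

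\textbf{Part (2), TP case.} Fekete's theorem reduces the task to proving that every contiguous minor $D[i,j;r] = A(\{i,\dots,i+r-1\};\{j,\dots,j+r-1\})$ is strictly positive, and the hypotheses deliver this whenever $i=1$ or $j=1$. For $i,j \geq 2$ and $r \geq 2$, applying Sylvester to the $(r+1)\times(r+1)$ bordered submatrix $A[\{i-1,\dots,i+r-1\};\{j-1,\dots,j+r-1\}]$ yields the Desnanot--Jacobi identity
\[
D[i,j;r]\,D[i-1,j-1;r] = D[i-1,j-1;r+1]\,D[i,j;r-1] + D[i,j-1;r]\,D[i-1,j;r].
\]
A double induction --- outer on $i+j$, inner on $r$ --- then gives $D[i,j;r]>0$: every minor on the right side has either strictly smaller $i+j$ or equal $i+j$ with strictly smaller $r$, and the divisor $D[i-1,j-1;r]$ is strictly positive by the outer induction, so the rearrangement is legitimate. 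The base case $r=1$ is handled separately via the $2\times 2$ identity $a_{ij}\,a_{i-1,j-1} = D[i-1,j-1;2] + a_{i-1,j}\,a_{i,j-1}$, whose right-hand side is positive by the outer induction.

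\textbf{Part (1), InTN case, and main obstacle.} Theorem \ref{teo:Fekete-Ando} (Cryer) reduces the task to showing $A(\boldsymbol{i};\{j,\dots,j+r-1\}) \geq 0$ for every $\boldsymbol{i} \in \mathscr{Q}(r,n)$ and every contiguous column range. Condition (b) settles $j=1$, while condition (c) settles the subcase in which $\boldsymbol{i}$ is initial. The remaining cases are handled by an induction on $j$ via Desnanot--Jacobi applied to the submatrix whose column range is extended by one on the left and whose rows are $\boldsymbol{i}$ together with one extra row, chosen so that the Sylvester divisor coincides with a leading principal minor, the strict positivity of which is exactly hypothesis (a). This control of the Sylvester divisor is the main obstacle of the sufficiency argument: nonnegativity of minors alone does not preclude divisors from vanishing, which is precisely why the strict positivity of the leading principal minors in (a) is postulated as a separate hypothesis rather than being derivable from (b)--(c).
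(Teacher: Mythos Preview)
The paper does not prove this theorem: it is quoted in the preliminaries with a citation to \cite{Gasca-Pena} and no argument is given, so there is no in-paper proof to compare against. I therefore assess your sketch on its own merits.

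Your Part~(2) is correct and standard. The Desnanot--Jacobi identity together with the double induction on $i+j$ (outer) and $r$ (inner) cleanly propagates strict positivity from initial contiguous minors to arbitrary contiguous minors, and Fekete's criterion then finishes.

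Part~(1) has a genuine gap. With the bordering you describe---columns $\{j-1,\dots,j+r-1\}$ and rows $\boldsymbol{i}\cup\{i_0\}$---the Desnanot--Jacobi divisor is $A\bigl(\{i_1,\dots,i_{r-1}\};\{j,\dots,j+r-2\}\bigr)$ (or a close variant depending on where $i_0$ sits), and its \emph{column} set always begins at $j\ge 2$. No choice of the extra row $i_0$ can turn this into a leading principal minor $A(\{1,\dots,k\})$, so hypothesis~(a) does not furnish the strict positivity you need, and the division step is unjustified at the very first step beyond $j=1$. Gasca and Pe\~na's original route sidesteps this obstruction by passing through Neville elimination (equivalently, the $LDU$ factorisation): hypothesis~(a) guarantees $A=LDU$ with $L$ unit lower triangular, $D$ positive diagonal, $U$ unit upper triangular; the relation $A(\boldsymbol{i};\{1,\dots,r\})=L(\boldsymbol{i};\{1,\dots,r\})\prod_{k\le r}D_{kk}$ shows that~(b) makes every initial-column minor of $L$ nonnegative, which for a lower unitriangular matrix suffices for $L\in\mathrm{TN}$; the symmetric argument with~(c) gives $U\in\mathrm{TN}$, and the product is TN. If you wish to keep a purely determinantal proof, your induction hypothesis must carry strict positivity of a larger family of divisors (e.g.\ all contiguous principal minors), which your outline does not arrange.
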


In a nonsingular totally nonnegative matrix, it is of interest to detect or characterize the singular submatrices. Allan Pinkus \cite{Pinkus 2008} proved a particularly interesting characterization of such minors. See also the treatment in \cite{Pinkus book}.

\begin{defi}[Pinkus' submatrices and minors]
	Given \( A \in \) InTN, a Pinkus submatrix is a contiguous submatrix
	\[
	A\begin{bNiceMatrix}
		\alpha & \alpha+1 & \Cdots & \alpha+r-1\\
		\beta& \beta+1 & \Cdots & \beta+r-1
	\end{bNiceMatrix},
	\]
	which is singular and does not contain a singular principal submatrix.
\end{defi}

\begin{teo}[Pinkus, 2008]\label{teo:Pinkus}
	Let \( A \in \) InTN, and let \( \{\alpha_k,\beta_k,r_k\}_{k=1}^l \) be the set of triples giving all the Pinkus' submatrices. Then,
	\begin{enumerate}
		\item \( \alpha_k \neq \beta_k \).
		\item \( A\begin{pNiceMatrix}[small]
			i_1 & \Cdots & i_r\\
			j_1 & \Cdots & j_r
		\end{pNiceMatrix} = 0 \) if and only if for some \( k \in \{1,\dots,l\} \), there exists a \( r_k \times r_k \) principal submatrix of \( A\begin{bNiceMatrix}[small]
			i_1 & \Cdots & i_r\\
			j_1 & \Cdots & j_r
		\end{bNiceMatrix} \) that belongs to the right shadow of the Pinkus submatrix \( A\begin{bNiceMatrix}[small]
			\alpha_k & \alpha_k+1 & \Cdots & \alpha_k+r_k-1\\
			\beta_k & \beta_k+1 & \Cdots & \beta_k+r_k-1
		\end{bNiceMatrix} \), when \( \alpha_k<\beta_k \), and to the corresponding left shadow for \( \alpha_k>\beta_k \).
	\end{enumerate}
\end{teo}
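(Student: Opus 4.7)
The plan is to handle the two parts separately: part (1) follows quickly from positivity of principal minors of InTN matrices, while part (2) hinges on a \emph{shadow propagation lemma} for totally nonnegative matrices.

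For part (1), I would use the classical fact that every principal minor of an InTN matrix is strictly positive; this may be derived from the $LDU$-factorization of a nonsingular TN matrix into bidiagonal factors with strictly positive diagonal, or by iterating Fischer's inequality (Theorem \ref{teo:Classical}) starting from $\det A>0$. If it were the case that $\alpha_k=\beta_k$, the Pinkus submatrix would itself be a principal submatrix of $A$, hence nonsingular, contradicting the assumption that it is singular. Therefore $\alpha_k\neq\beta_k$.

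For part (2), the central ingredient is the shadow propagation lemma: if $A$ is InTN and the contiguous minor $A(\alpha,\dots,\alpha+r-1;\beta,\dots,\beta+r-1)$ vanishes with $\alpha<\beta$, then every $r\times r$ minor of $A$ with row indices in $\{1,\dots,\alpha+r-1\}$ and column indices in $\{\beta,\dots,n\}$ also vanishes (and symmetrically for $\alpha>\beta$). I would establish this by induction on $r$ and on the joint dispersion of the row and column labels, using Sylvester-type determinantal identities to write a minor with larger index range as a nonnegative combination of smaller minors, some of which reduce to the original vanishing Pinkus minor; nonnegativity (TN) forces every term, and hence the whole sum, to vanish. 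Granting the lemma, the $(\Leftarrow)$ direction of (2) is direct: the prescribed principal submatrix $B$ of $A[\boldsymbol i;\boldsymbol j]$ sits in the shadow of a Pinkus submatrix, so $\det B=0$; since $A[\boldsymbol i;\boldsymbol j]$ is TN (a submatrix of a TN matrix), the generalized Hadamard--Fischer (Koteljanskii) inequality for TN matrices then forces $A(\boldsymbol i;\boldsymbol j)=0$. For the $(\Rightarrow)$ direction, assume $A(\boldsymbol i;\boldsymbol j)=0$; iterated Sylvester expansions combined with the Fekete--Cryer criterion (Theorem \ref{teo:Fekete-Ando}) locate a contiguous singular minor of $A$ whose position is compatible with $A[\boldsymbol i;\boldsymbol j]$. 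Choosing such a minor of minimal size and using part (1) to rule out any singular principal submatrix strictly inside it, one obtains a Pinkus submatrix whose shadow, by construction, contains the required $r_k\times r_k$ principal submatrix of $A[\boldsymbol i;\boldsymbol j]$.

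The main obstacle I expect is establishing the shadow propagation lemma rigorously. The underlying picture is clean via the Lindström--Gessel--Viennot interpretation of TN minors as weighted sums over non-intersecting lattice path families: the vanishing Pinkus minor signals the absence of any such family, and any path family realizing a minor in the shadow could be modified into one for the Pinkus minor, a contradiction. Converting this combinatorial intuition into a self-contained determinantal induction in which each Sylvester expansion preserves nonnegativity and the zero contribution is never cancelled by a negative term, while simultaneously bookkeeping non-contiguous index sets in both the row and column directions, is the technically delicate step.
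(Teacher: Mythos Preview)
The paper does not prove this theorem. Theorem~\ref{teo:Pinkus} is stated in the preliminaries as a cited result from Pinkus~\cite{Pinkus 2008} (see also \cite{Pinkus book}), with no accompanying proof; it is used later as a black box in the proof of Theorem~\ref{teo:BTP vs contiguous minors}. There is therefore nothing in the paper to compare your proposal against.

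That said, your outline is broadly in the spirit of Pinkus's original argument. Part~(1) is fine as stated: principal minors of an InTN matrix are all positive, so a Pinkus submatrix cannot be principal. For part~(2), the shadow propagation phenomenon you describe is indeed the heart of the matter, and your honest assessment of the difficulty is accurate. One point to watch: the Koteljanskii-type step you invoke for the $(\Leftarrow)$ direction needs care, since the generalized Fischer inequality for TN matrices bounds $\det$ by a product of \emph{complementary} principal minors, and here you want to conclude vanishing of the full determinant from vanishing of a single principal minor that need not be complementary to anything nonsingular. The cleaner route, and the one Pinkus takes, is to work directly with the shadow lemma in both directions rather than passing through Fischer at that stage. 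If you want a self-contained treatment, consult \cite{Pinkus 2008} or Chapter~3 of \cite{Pinkus book}.
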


	\section{Banded totally positive matrices}
	
We are investigating a specific class of matrices known as banded matrices, which are characterized by having $p$ subdiagonals and $q$ superdiagonals. These matrices are denoted as B$_{p,q}$. Furthermore, we define B$_{p,q}$TN as the subset of B$_{p,q}$ that intersects with TN, B$_{p,q}$InTN as the subset of B$_{p,q}$ that intersects with InTN, and B$_{p,q}$IITN as the subset of B$_{p,q}$ that intersects with IITN. To streamline notation, we also use BTN, BInTN, and BIITN when it is unnecessary to specify the size of the band.

A particular case of banded matrices arises when $p=n$ and $q=0$, resulting in lower triangular matrices, and when $p=0$ and $q=n$, resulting in upper triangular matrices. These specific cases are denoted by 	\scalebox{.65}{\LT}  \hspace{-6pt}TN or 	\scalebox{.7}{\UT}  \hspace{-6pt}TN for lower and upper totally nonnegative matrices, respectively.

Harvey Price established the following theorem, as referenced in \cite{Price0} and \cite[Theorem 5.5]{Price}, with additional insights from \cite{Metelmann}:
\begin{teo}[Price, 1965]\label{teo:Price}
	Let $T \in \text{B}_{p,q}$ be a nonsingular banded matrix with contiguous minors
	\[
T\begin{pNiceMatrix}
	i&	i+1 &\Cdots & i+r-1\\
	j&	j+1&\Cdots & j+r-1
\end{pNiceMatrix}> 0,
	\]
	for $i-p \leq j \leq i+q$, where $i, j \in \{1, \dots, n\}$. Then, $T$ is a  oscillatory matrix.
\end{teo}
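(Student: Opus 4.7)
My strategy is to verify the classical Gantmacher--Krein criterion: a nonsingular totally nonnegative matrix is oscillatory precisely when all its sub- and super-diagonal entries are strictly positive. Nonsingularity of $T$ is assumed, and the positivity of $T_{i,i+1}$ and $T_{i+1,i}$ is free, since both are $1\times 1$ in-band doubly-contiguous minors (assuming $p,q\geq 1$; otherwise $T$ is triangular and the claim either collapses or is vacuous). So the real work is to establish that $T$ is TN.

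For total nonnegativity I would invoke Cryer's criterion (Theorem~\ref{teo:Fekete-Ando}), which reduces the task to showing that every minor $T(\boldsymbol i;\boldsymbol j)$ with contiguous columns ($d(\boldsymbol j)=0$) is nonnegative. I plan to proceed in two stages.

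\textbf{Stage 1.} Handle the doubly-contiguous case $d(\boldsymbol i)=d(\boldsymbol j)=0$. The in-band sub-case is the hypothesis. For the out-of-band case with $\boldsymbol i=\{i,\dots,i+r-1\}$, $\boldsymbol j=\{j,\dots,j+r-1\}$ and $j>i+q$, every entry in the first row of the submatrix is $T_{i,k}$ with $k\geq j>i+q$, so it vanishes by the band condition; the whole row is zero and the minor is zero. The case $j<i-p$ is symmetric, yielding a zero first column.

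\textbf{Stage 2.} The main obstacle is minors with contiguous columns but non-contiguous rows. My plan is to apply Sylvester's determinantal identity iteratively to rewrite such a minor as a rational expression in doubly-contiguous minors, with denominators that are leading principal in-band minors (hence strictly positive by hypothesis). The delicate bookkeeping is to choose pivots whose supporting submatrices lie inside the band, so that Sylvester's identity can legitimately be divided through and each contributing factor is either known positive (from Stage~1's in-band positivity) or known zero (from out-of-band cancellation). Once Stage~2 is complete, Cryer's criterion gives $T\in\mathrm{TN}$, and the Gantmacher--Krein characterization then concludes that $T$ is oscillatory.
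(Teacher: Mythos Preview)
The paper does not supply a proof of Theorem~\ref{teo:Price}; it is quoted as a result of Price, with references to \cite{Price0,Price} and \cite{Metelmann}, and is then used as a black box in later arguments. So there is no in-paper proof to compare your proposal against.

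On the substance of your sketch: the overall architecture---establish $T\in\mathrm{TN}$ via Cryer's criterion and then invoke the Gantmacher--Krein characterization using the positivity of the entries $T_{i,i\pm 1}$---is the classical route, and Stage~1 is correct as written. The gap is Stage~2. The standard Fekete/Sylvester induction that reduces a minor with contiguous columns and dispersed rows to doubly-contiguous minors requires, at each step, a strictly positive pivot minor to divide through by. In the genuine TP setting every doubly-contiguous minor is positive and this is automatic; here the out-of-band contiguous minors vanish, so you must argue that the induction can always be steered so that the pivot stays inside the band, and that the resulting sign is preserved when one of the two Sylvester cross-terms happens to be an out-of-band zero. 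You acknowledge this (``delicate bookkeeping'') but do not carry it out, and it is precisely this band-aware version of the Sylvester reduction that constitutes the actual content of Price's theorem. Without that argument made explicit---why the row gap you are closing always admits an in-band pivot, and why nonnegativity survives the cancellations---Stage~2 is a plan rather than a proof.
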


Now, let's introduce submatrices emerging from the banded structure, leading to the distribution of zeros outside the band and resulting in trivially vanishing minors. We refer to them as trivial submatrices or trivial minors.

\begin{defi}
	Trivial submatrices of $T \in$ B$_{p,q}$TN are those containing a zero originating from outside the band on their diagonal. In other words, they are submatrices of the form:
	\begin{align*}
		T\begin{bNiceMatrix}
			i_1 &\Cdots & i_r\\
			j_1&\Cdots & j_r
		\end{bNiceMatrix},
	\end{align*}
	where there exists a $k\in\{1,\dots,r\}$ such that $(i_k,j_k)$ lies strictly above the $q$-th superdiagonal or strictly below the $p$-th subdiagonal.
	
	Submatrices and minors that are not trivial are known as nontrivial submatrices and minors, respectively.
\end{defi}
	
Given the banded structure, if a submatrix is trivial, the zero in its diagonal will throw a corresponding shadow of zeros:

\begin{pro}
	The trivial submatrices
	\begin{align*}
		T\begin{bNiceMatrix}
			i_1 &\Cdots & i_r\\
			j_1&\Cdots & j_r
		\end{bNiceMatrix}
	\end{align*}
	with $T_{i_k,j_k}=0$ must satisfy the following conditions:
	\begin{enumerate}
		\item The zero entry at the diagonal $(i_k,j_k)$ throws a zero right shadow if this zero entry lies strictly above the $q$-th superdiagonal.
		\item The zero entry at the diagonal $(i_k,j_k)$ throws a zero left shadow if this zero entry lies strictly below the $p$-th subdiagonal.
	\end{enumerate}
\end{pro}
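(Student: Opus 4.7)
The plan is to reduce the statement directly to the defining condition $T \in \text{B}_{p,q}$, namely that $T_{i,j}=0$ whenever $j-i>q$ (strictly above the $q$-th superdiagonal) or $i-j>p$ (strictly below the $p$-th subdiagonal). Interpreting the right shadow of the single entry $(i_k,j_k)$ as the initial submatrix $T[\{1,\dots,i_k\};\{j_k,\dots,n\}]$ and the left shadow as $T[\{i_k,\dots,n\};\{1,\dots,j_k\}]$, which is the natural specialization of the shadow definition from a contiguous block to a single diagonal entry, the statement becomes a purely combinatorial observation about which index pairs land outside the band.

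For part (i), I would start from the hypothesis $j_k-i_k>q$ and pick an arbitrary entry $(i,j)$ inside the right shadow, so that $i\leq i_k$ and $j\geq j_k$. Subtracting these two inequalities gives $j-i\geq j_k-i_k>q$, so $(i,j)$ also lies strictly above the $q$-th superdiagonal and hence $T_{i,j}=0$. For part (ii), the symmetric argument applies: from $i_k-j_k>p$ and $(i,j)$ in the left shadow one has $i\geq i_k$, $j\leq j_k$, and therefore $i-j\geq i_k-j_k>p$, placing $(i,j)$ strictly below the $p$-th subdiagonal so that $T_{i,j}=0$.

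There is no genuine obstacle here: totally nonnegativity is not actually used, only the banded shape. The only point worth being careful about is book-keeping of strict versus non-strict inequalities, to make sure the propagated entries fall strictly outside the band rather than onto the extremal (sub- or super-) diagonal, where entries need not vanish. With that distinction kept straight, the two shadow statements follow immediately and independently.
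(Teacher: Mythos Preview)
Your argument is correct. The paper actually states this proposition without proof, treating it as immediate from the banded structure; your write-up simply spells out that immediate argument---the monotonicity $i\leq i_k$, $j\geq j_k$ (respectively $i\geq i_k$, $j\leq j_k$) forces $j-i>q$ (respectively $i-j>p$)---and your remark that only the band shape, not total nonnegativity, is used is also accurate.
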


\begin{pro}
	Trivial submatrices of $T\in$ BTN are singular.
\end{pro}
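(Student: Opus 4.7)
The plan is to leverage the shadow property from the preceding proposition to exhibit a large zero block in the submatrix, and then use a simple rank or dimension argument to conclude singularity.

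First I would fix a trivial submatrix
\[
T\begin{bNiceMatrix}
	i_1 & \Cdots & i_r\\
	j_1 & \Cdots & j_r
\end{bNiceMatrix},
\]
and pick an index $k$ with $T_{i_k,j_k}=0$ coming from outside the band. Suppose first that $(i_k,j_k)$ lies strictly above the $q$-th superdiagonal, i.e. $j_k-i_k>q$. Then for every $\alpha\le k$ and $\beta\ge k$ we have $i_\alpha\le i_k$ and $j_\beta\ge j_k$, so $j_\beta-i_\alpha\ge j_k-i_k>q$, and hence $T_{i_\alpha,j_\beta}=0$. This is exactly the right shadow statement already recorded in the preceding proposition, and it places a zero block of size $k\times (r-k+1)$ in the upper-right corner of the submatrix.

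Next I would observe that the first $k$ rows of the $r\times r$ submatrix now have nonzero entries only in the first $k-1$ columns: these are $k$ vectors in a $(k-1)$-dimensional subspace of $\R^{r}$, hence linearly dependent, forcing the determinant to vanish. Equivalently, one can expand as a block determinant: any generalized diagonal in Leibniz's formula must use at least one entry from the upper-right zero block, so every term vanishes. The symmetric case where $(i_k,j_k)$ lies strictly below the $p$-th subdiagonal, i.e. $i_k-j_k>p$, is handled identically using the left shadow, producing a zero block of size $(r-k+1)\times k$ in the lower-left corner.

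There is no real obstacle here; the whole content is bookkeeping with the band inequalities and the shadow proposition. The only thing to be careful about is to verify that the zero block produced always has one dimension strictly larger than its complement, i.e. of total size exceeding $r$, so that the linear-dependence argument applies regardless of where the offending index $k$ sits within $\{1,\dots,r\}$; the counts $k+(r-k+1)=r+1$ make this automatic.
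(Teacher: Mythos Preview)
Your proof is correct and follows essentially the same approach as the paper: both exploit the zero shadow to produce a rectangular zero block whose dimensions sum to $r+1$, forcing singularity. The only cosmetic difference is that the paper phrases the conclusion via a block-triangular determinant computation (one diagonal block acquires a zero row), whereas you phrase it as a rank/dimension count on the first $k$ rows; these are equivalent readings of the same structure.
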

\begin{proof}
	The trivial submatrices for $T_{i_k,j_k}=0$ above the $q$-th superdiagonal are of the form:
	\begin{align*}
		T\begin{bNiceMatrix}
			i_1 &\Cdots & i_r\\
			j_1&\Cdots & j_r
		\end{bNiceMatrix}=\begin{bNiceMatrix}
			*&\Cdots&&&*&0&\Cdots&&0\\\\
			\Vdots &&\Ddots& &\Vdots  &		\Vdots & &&\Vdots\\\\
			*&\Cdots&&&*&0&\Cdots&&0\\
			*&\Cdots&&&*&0&\Cdots&&0\\
			*&\Cdots&&&*&	*&*&\Cdots&*&\\
			\Vdots &&& &\Vdots  &		\Vdots & \Vdots&\Ddots&\Vdots\\
			*&\Cdots&&&*&	*&*&\Cdots&*&
		\end{bNiceMatrix},
	\end{align*}
	and the corresponding minor is obviously zero (consider a block computation of the determinant, and note that one of the diagonal blocks has a zero row). A similar argument applies for $T_{i_k,j_k}=0$ below the $p$-th subdiagonal and the corresponding left shadow of zeros:
	\begin{align*}
		T\begin{bNiceMatrix}
			i_1 &\Cdots & i_r\\
			j_1&\Cdots & j_r
		\end{bNiceMatrix}=\begin{bNiceMatrix}
			*&\Cdots&&&*&*&\Cdots&&*\\\\
			\Vdots &&\Ddots& &\Vdots  &		\Vdots & &&\Vdots\\\\
			*&\Cdots&&&*&*&\Cdots&&*\\
			0&\Cdots&&&0&0&*&\Cdots&*\\
			&&&&&	&*&\Cdots&*&\\
			\Vdots &&& &\Vdots  &		\Vdots & \Vdots&\Ddots&\Vdots\\
			0&\Cdots&&&0&	0&*&\Cdots&*&
		\end{bNiceMatrix}.
	\end{align*}
\end{proof}
	
Given $s\in\{1,\dots,n\}$, we consider the translation $\tau_s:\mathscr Q(r,n)\to \mathscr Q(r,n)$ defined as $\tau_s \boldsymbol{i}=\{i_1+s,\dots,i_r+s\}$, which is well-defined for $i_r+s\leq n$. Let $k$ be the largest index $k$ such that $i_{k}+s\leq n$, implying $i_{k+1}+s> n$; then, we define $\tau_s \boldsymbol{i}=\{i_1+s,\dots,i_{k}+s,n,\dots, n\}$.

\begin{pro}
	Nontrivial submatrices are of the form $T[\boldsymbol{i};\boldsymbol{j}]$ with $\boldsymbol{j} \leq \tau_q\boldsymbol{i}$ and $\boldsymbol{i}\leq \tau_p \boldsymbol{j}$.
\end{pro}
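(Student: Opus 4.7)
The plan is to show that the two characterizations of ``nontrivial'' coincide, essentially by unpacking the definitions of the band, of the trivial submatrices, and of the translation $\tau_s$, and verifying they are componentwise translations of the same inequalities.

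First I would recall that $T\in\text{B}_{p,q}$ means $T_{ij}=0$ whenever $j>i+q$ (strictly above the $q$-th superdiagonal) or $i>j+p$ (strictly below the $p$-th subdiagonal). By the given definition, $T[\boldsymbol{i};\boldsymbol{j}]$ is trivial iff there exists $k\in\{1,\dots,r\}$ with $(i_k,j_k)$ strictly above the $q$-th superdiagonal or strictly below the $p$-th subdiagonal, i.e., $j_k>i_k+q$ or $i_k>j_k+p$. Negating, $T[\boldsymbol{i};\boldsymbol{j}]$ is nontrivial iff for every $k\in\{1,\dots,r\}$ both
\[
j_k\leq i_k+q\qquad\text{and}\qquad i_k\leq j_k+p
\]
hold simultaneously. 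This is the core inequality that must be matched with the partial order.

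Next I would translate these componentwise inequalities into the lattice notation. By definition, the $k$-th entry of $\tau_q\boldsymbol{i}$ equals $i_k+q$ as long as $i_k+q\leq n$, and equals $n$ otherwise. Since $\boldsymbol{j}\in\mathscr{Q}(r,n)$, all entries $j_k$ satisfy $j_k\leq n$ automatically, so the inequality $j_k\leq(\tau_q\boldsymbol{i})_k$ is equivalent to $j_k\leq i_k+q$ when $i_k+q\leq n$ and is trivially satisfied when $i_k+q>n$ (because then $j_k\leq n= (\tau_q\boldsymbol{i})_k$ and also $j_k\leq n<i_k+q$ is automatic). Hence the componentwise condition $j_k\leq i_k+q$ for all $k$ is exactly $\boldsymbol{j}\leq\tau_q\boldsymbol{i}$. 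A symmetric argument, exchanging the roles of $(\boldsymbol{i},p)$ and $(\boldsymbol{j},q)$, gives that $i_k\leq j_k+p$ for all $k$ is exactly $\boldsymbol{i}\leq\tau_p\boldsymbol{j}$.

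Combining the two equivalences, the submatrix is nontrivial if and only if both $\boldsymbol{j}\leq\tau_q\boldsymbol{i}$ and $\boldsymbol{i}\leq\tau_p\boldsymbol{j}$ hold, which is the claim. The only mildly delicate point I anticipate is the boundary bookkeeping for $\tau_s$ when $i_r+s$ exceeds $n$, which is handled transparently by the convention that the overflowing tail is set to $n$; this makes the corresponding inequalities vacuous, consistent with the fact that the entry $(i_k,j_k)$ with $i_k+q>n$ and $j_k\leq n$ cannot violate the superdiagonal bound on the right of the matrix.
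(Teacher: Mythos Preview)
Your argument is correct and is exactly the intended one: the paper states this proposition without proof, treating it as an immediate consequence of the definitions of trivial submatrix and of the translation $\tau_s$, and your write-up simply makes those definitional unpackings explicit, including the harmless boundary case where $\tau_s$ caps overflowing entries at $n$.
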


We now define the set of banded totally positive matrices:

\begin{defi}
	A  banded matrix is said to be  banded totally positive  if all nontrivial minors are positive. The set of such matrices will be denoted by BTP or by B$_{p,q}$TP.
\end{defi}

A special case of banded totally positive matrices is represented by upper triangular matrices with $p=0$ and $q=n$, and lower triangular matrices with $p=n$ and $q=0$. These sets are denoted by \scalebox{.7}{\UT}  \hspace{-6pt}TP or \scalebox{.65}{\LT}  \hspace{-6pt}TP, respectively.

	\begin{pro}[Nontrivial contiguous submatrices]
		For $T\in\R^{n\times n}$ such that $T\in$B$_{(p,q)}$TN, the corresponding nontrivial contiguous submatrices, associated with zero dispersion sequences of indexes,
		\begin{align*}
			T\begin{bNiceMatrix}
				i&	i+1 &\Cdots & i+r-1\\
				j&	j+1&\Cdots & j+r-1
			\end{bNiceMatrix},	
		\end{align*} 
		are those satisfying $i-p\leq j\leq i+q$, with $i,j\in\{1,\dots,n\}$.
	\end{pro}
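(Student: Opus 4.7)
The plan is to observe that for a contiguous submatrix all diagonal entries share the same column-minus-row index, so the "outside the band" condition is uniform along the diagonal. Concretely, the diagonal of
\[
T\begin{bNiceMatrix}
    i & i+1 & \Cdots & i+r-1\\
    j & j+1 & \Cdots & j+r-1
\end{bNiceMatrix}
\]
consists of the entries $T_{i+k,\,j+k}$ for $k\in\{0,\dots,r-1\}$, and each has column-minus-row equal to $j-i$, independent of $k$.

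From the definition of trivial submatrix, triviality occurs precisely when some diagonal entry $T_{i+k,j+k}$ lies strictly above the $q$-th superdiagonal or strictly below the $p$-th subdiagonal. Since the column-minus-row index is the constant $j-i$ along the diagonal, the first condition becomes $j-i>q$ and the second $i-j>p$. Hence the submatrix is trivial if and only if $j>i+q$ or $j<i-p$; equivalently, it is nontrivial if and only if $i-p\leq j\leq i+q$.

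I would organize the argument in two short steps: first the observation on the constant diagonal index, then its translation into the inequality characterising nontriviality. Nothing here is really an obstacle; the statement is an immediate consequence of the definition of trivial submatrix once one notices that all diagonal entries of a contiguous submatrix lie on the same parallel to the main diagonal of $T$, so membership in the band is an "all-or-none" property for this specific family of submatrices.
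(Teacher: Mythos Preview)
Your argument is correct and matches the paper's own proof, which simply notes that a contiguous submatrix is trivial exactly when its diagonal does not lie within the band of $T$. You have spelled out explicitly the key observation that the column-minus-row index $j-i$ is constant along the diagonal of a contiguous submatrix, which is precisely what underlies the paper's terse two-line justification.
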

	
	\begin{proof}
		Trivial contiguous submatrices are those whose diagonal does not lie within the band of $T$. If the diagonal lies in the band, they do not have a zero on their diagonal.
	\end{proof}

	Let's illustrate this with a diagram of a banded matrix. Trivial contiguous submatrices $T_2$ and $T_1$ are highlighted in red. Nontrivial contiguous submatrices $T_3$ and $T_4$, marked in green, lie precisely on the border of the band, where the submatrix's diagonal aligns with an extreme diagonal. Finally, nontrivial submatrices $T_5$ and $T_6$ are highlighted in blue, with their diagonals inside the band.
\begin{center}
	\drawmatrixset{bbox style={fill=Gray!30}}
$\drawmatrix[bbox/.append, ,scale=8,banded, width=.8]T$
	
\vspace*{-2cm}\hspace{-2cm}$\drawmatrix[scale=1.5,draw=BrickRed,thick]{T_1}$ 

\vspace*{-7cm}\hspace{0cm}
$\drawmatrix[scale=2,draw=NavyBlue,thick]{T_5}$

\vspace*{2cm}\hspace{-4.16cm}
$\drawmatrix[, scale=1,draw=PineGreen,thick]{T_3}$

\vspace*{-3.2cm}\hspace{4.55cm}
$\drawmatrix[, scale=1.2,draw=PineGreen,thick]{T_4}$

\vspace*{1cm}\hspace{0cm}$\drawmatrix[scale=1,draw=NavyBlue,thick]{T_6}$ 

\vspace*{-5,5cm}\hspace*{3.5cm}	$\drawmatrix[scale=1,draw=BrickRed,thick]{T_2}$ 
\vspace*{8cm}
\end{center}
		
We recast Price's Theorem \ref{teo:Price} as follows:
\begin{teo}\label{teo:Prince2} 
Banded matrices with  all nontrivial contiguous minors being positive  are oscillatory.
\end{teo}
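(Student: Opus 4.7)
The plan is to deduce this directly from Price's Theorem (Theorem~\ref{teo:Price}) by checking that its hypotheses are met under our assumption. Price's theorem concludes that a nonsingular banded matrix whose contiguous minors indexed by $i,j$ with $i-p\leq j\leq i+q$ are positive is oscillatory; so all that is needed is to match our ``nontrivial contiguous minors are positive'' assumption with Price's indexing condition, and to verify nonsingularity.

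For the indexing, the proposition immediately preceding the theorem shows that the nontrivial contiguous submatrices of $T\in\mathrm{B}_{p,q}$ are exactly those of the form
\[
T\begin{bNiceMatrix} i & i+1 & \Cdots & i+r-1\\ j & j+1 & \Cdots & j+r-1 \end{bNiceMatrix}
\]
with $i-p\leq j\leq i+q$. Hence our hypothesis that every nontrivial contiguous minor is strictly positive is verbatim Price's condition on contiguous minors.

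For nonsingularity, I would observe that the full matrix $T=T[\{1,\dots,n\};\{1,\dots,n\}]$ is itself a contiguous submatrix whose diagonal lies on the main diagonal of $T$, hence inside the band. Consequently $T$ is a nontrivial contiguous submatrix of itself and by hypothesis $\det T=T(\{1,\dots,n\};\{1,\dots,n\})>0$, so $T$ is nonsingular. (Alternatively, each leading principal minor $T(\{1,\dots,r\})$ is nontrivial contiguous and therefore positive.)

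With both conditions of Theorem~\ref{teo:Price} verified, Price's theorem yields that $T$ is oscillatory, which is exactly the claim. There is no genuine obstacle here: the content of the proof is simply the translation between the ``band'' description $i-p\leq j\leq i+q$ used by Price and the intrinsic notion of nontrivial contiguous submatrix introduced earlier in this section, together with the remark that the whole matrix is itself a nontrivial contiguous submatrix.
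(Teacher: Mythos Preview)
Your proposal is correct and follows exactly the approach of the paper, which simply presents this theorem as a restatement of Price's Theorem~\ref{teo:Price} without giving a separate proof. You have merely made explicit the two routine checks (matching the index condition $i-p\leq j\leq i+q$ with the notion of nontrivial contiguous submatrix via the preceding proposition, and noting that $\det T>0$ since $T$ is itself a nontrivial contiguous submatrix) that the paper leaves implicit.
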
		
		
Inspired by the characterization of \scalebox{.65}{\LT}  \hspace{-6pt}TN or 	\scalebox{.7}{\UT}  \hspace{-6pt}TN \cite{Cryer}, we now present a preliminary characterization of banded totally positive matrices in terms contiguous minors:

\begin{teo}\label{teo:BTP vs contiguous minors}
	A matrix is banded totally positive if and only if all nontrivial contiguous minors are positive.
\end{teo}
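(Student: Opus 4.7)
The forward implication is immediate, since every nontrivial contiguous minor is in particular a nontrivial minor. For the converse, my plan is to first invoke Theorem \ref{teo:Prince2} to conclude that $T$ is oscillatory, hence $T\in$ IITN, so every minor of $T$ is already nonnegative. It then suffices to show that every nontrivial minor is nonzero, which I intend to do by applying Pinkus' Theorem \ref{teo:Pinkus} once the Pinkus submatrices of $T$ are explicitly identified.

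The central claim is that, under the hypothesis, the Pinkus submatrices of $T$ are exactly the $1\times 1$ submatrices $T[\{i\};\{j\}]$ consisting of a single entry $T_{i,j}=0$ lying outside the band (so $j>i+q$ or $j<i-p$). On the one hand, a Pinkus submatrix is contiguous and singular; by hypothesis every nontrivial contiguous submatrix is nonsingular, so any Pinkus submatrix must be trivial. On the other hand, any trivial contiguous submatrix $T[\{\alpha,\dots,\alpha+r-1\};\{\beta,\dots,\beta+r-1\}]$ of size $r\geq 2$ satisfies $\beta-\alpha>q$ or $\alpha-\beta>p$, and since $(\beta+l)-(\alpha+l)=\beta-\alpha$ for every $l$, every diagonal position $(\alpha+l,\beta+l)$ lies outside the band and the corresponding $1\times 1$ principal sub-submatrix vanishes. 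Hence any trivial contiguous submatrix of size $\geq 2$ contains a singular principal sub-submatrix and cannot be Pinkus. A lone zero entry $(i,j)$ outside the band is, by contrast, singular and has no proper principal sub-submatrix, so it is Pinkus; condition (i) of Theorem \ref{teo:Pinkus} is automatic because $i\neq j$.

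With this identification in hand, Theorem \ref{teo:Pinkus} yields $T(\boldsymbol i;\boldsymbol j)=0$ if and only if some diagonal entry $T_{i_l,j_l}$ of $T[\boldsymbol i;\boldsymbol j]$ lies in the right or left shadow of one of these $1\times 1$ Pinkus submatrices. But the shadow of a zero entry at $(i,j)$ outside the band consists precisely of entries outside the band, so this condition amounts to saying that some $(i_l,j_l)$ is itself outside the band, i.e.\ that $T[\boldsymbol i;\boldsymbol j]$ is trivial. Contrapositively, every nontrivial minor is nonzero, and being nonnegative by InTN it is strictly positive, which is the desired conclusion.

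I expect the main obstacle to be the precise identification of the Pinkus submatrices in the second step, specifically the observation that the uniform shift $l\mapsto(\alpha+l,\beta+l)$ along the diagonal of a contiguous submatrix preserves the signed distance to the main diagonal, forcing every trivial contiguous submatrix of size $r\geq 2$ to have its entire diagonal vanish and hence to harbor singular principal sub-submatrices. Everything else is a direct assembly of Price's and Pinkus' theorems together with the banded shadow structure already set up in the preceding propositions.
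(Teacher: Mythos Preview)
Your proof is correct and follows essentially the same route as the paper: Price's theorem to obtain IITN, then Pinkus' theorem together with the identification of the Pinkus submatrices as the $1\times1$ zero entries outside the band to conclude that the vanishing minors are exactly the trivial ones. Your justification that a trivial contiguous submatrix of size $r\geq 2$ cannot be Pinkus (via the constant diagonal offset forcing every $1\times1$ principal sub-submatrix to vanish) is in fact a bit more explicit than the paper's, which simply notes that such submatrices contain a zero row or column.
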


\begin{proof}
In terms of necessity, we note that every nontrivial minor maintains is positive, including those formed by contiguous minors.

Regarding sufficiency, the application of Price's Theorem \ref{teo:Prince2} indicates the matrix's property of being IITN, thereby enabling the application of Pinkus' Theorem \ref{teo:Pinkus}. However, it's crucial to note that the trivial contiguous minors with $r>1$ often encompass a zero column or row, making them unsuitable for Pinkus' contiguous submatrix criteria. Consequently, Pinkus' contiguous submatrices are confined to $1\times 1$ submatrices, representing individual entries located above the $q$-th superdiagonal or below the $p$-th subdiagonal. Furthermore, it's noteworthy that the shadow thrown  by the Pinkus entries are consistently filled with zeros.
	
	Pinkus' Theorem \ref{teo:Pinkus} asserts that for the submatrix
	\begin{align*}
		T\begin{bNiceMatrix}
			i_1 &\Cdots & i_r\\
			j_1&\Cdots & j_r
		\end{bNiceMatrix}
	\end{align*}
	to be singular, it is necessary for it to contain a $r_k\times r_k$ principal submatrix lying in the corresponding shadow. Here, $r_k=1$, implying that the theorem demands a diagonal entry (since the minor must be principal) to reside in the corresponding shadow. However, the shadow is filled with zeros, indicating that the diagonal entry must be zero. Thus, the minor is trivial.
	
	We conclude that the zero minors are indeed the trivial ones, as asserted.
\end{proof}

Following \cite[Theorem 3.1]{Cryer} and \cite[page 85]{Karlin}, we provide an improved characterization. The idea is to examine a reduced family of zero-dispersion nontrivial submatrices, specifically initial submatrices and minors.
To achieve this objective, we will utilize the main result presented in \cite{Metelmann}. To facilitate this, let us introduce:
\begin{defi}
	For any matrix \(T\in\mathbb{R}^{n\times n}\), we define the $\Delta$ initial  submatrices as follows:
	\begin{align*}
		\begin{aligned}
			\Delta^i[T]&\coloneq T\begin{bNiceMatrix}
				1      & \Cdots & i     \\
				n-i+1 & \Cdots & n
			\end{bNiceMatrix}, & \Delta_i[T]&\coloneq T\begin{bNiceMatrix}
				n-i+	1 & \Cdots & n \\
				1      & \Cdots & i
			\end{bNiceMatrix},
		\end{aligned}
	\end{align*}
	and corresponding minors $\Delta^i(T)$ and $\Delta_i(T)$.
\end{defi}

In a banded matrix, several of these $\Delta$ minors trivially evaluate to zero. We refer to these as the trivial \(\Delta\)s.

\begin{lemma}
	If \(T\in\text{B}_{p,q}\), then 
	\begin{align*}
		\begin{aligned}
			\Delta^i(T)&=0, & \text{for } i\in\{1,\dots, n-q-1\},
		\end{aligned} &&
		\begin{aligned}
			\Delta_i(T)&=0, & \text{for } i\in\{1,\dots, n-p-1\},
		\end{aligned}
	\end{align*} 
	Hence, there are \(p+q-2\) nontrivial \(\Delta\)s. Namely, \(\Delta^i(T)\) for \(i\in\{n-q,\dots, n\}\) and \(\Delta_i(T)\) for \(i\in\{n-p,\dots, n\}\).
\end{lemma}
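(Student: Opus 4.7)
The plan is to analyze each of the two $\Delta$ submatrices in its own local coordinates and exhibit, outside the stated threshold on $i$, a triangular block of forced zeros that kills the determinant. The band hypothesis $T\in\text{B}_{p,q}$ translates into concrete vanishing conditions on entries whose superdiagonal offset exceeds $q$ or whose subdiagonal offset exceeds $p$; the trick is simply to re-express those offsets in the local coordinates of $\Delta^{i}[T]$ and $\Delta_{i}[T]$ and compare with $i$.

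First I treat $\Delta^{i}[T]$. Its local $(r,c)$-entry is $T_{r,\,n-i+c}$ for $r,c\in\{1,\dots,i\}$, and the corresponding superdiagonal offset in $T$ is
\[
(n-i+c)-r \;=\;(c-r)+(n-i).
\]
On and above the local diagonal one has $c\ge r$, so this offset is at least $n-i$. When $i\le n-q-1$, this gives offset $\ge q+1 > q$, so every on-and-above-diagonal entry of $\Delta^{i}[T]$ lies strictly above the $q$-th superdiagonal of $T$ and is therefore zero. A matrix whose on-and-above-diagonal entries all vanish has determinant zero, because every permutation $\sigma$ of $\{1,\dots,i\}$ must satisfy $\sigma(k)\ge k$ for at least one $k$ (otherwise $\sigma(1)<1$), so each Leibniz term contains a zero factor. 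Hence $\Delta^{i}(T)=0$ for all $i\in\{1,\dots,n-q-1\}$.

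The argument for $\Delta_{i}[T]$ is the mirror image using subdiagonal offsets. Its local $(r,c)$-entry is $T_{n-i+r,\,c}$, with offset in $T$ equal to $(n-i+r)-c=(r-c)+(n-i)$. On and below the local diagonal this is at least $n-i$, and when $i\le n-p-1$ it exceeds $p$, placing those entries strictly below the $p$-th subdiagonal of $T$ and forcing them to vanish. The submatrix is then strictly upper triangular in its own coordinates, giving $\Delta_{i}(T)=0$ on the stated range. Enumerating the values of $i$ that escape these two conclusions yields $\Delta^{i}(T)$ for $i\in\{n-q,\dots,n\}$ and $\Delta_{i}(T)$ for $i\in\{n-p,\dots,n\}$, as stated.

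I do not anticipate any substantive obstacle: the lemma is essentially bookkeeping once the band condition is rewritten in the reversed indexing of the $\Delta$ submatrices. The one conceptual point worth stating carefully is the elementary observation that having zeros on and above (resp.\ below) the diagonal of a square matrix already forces its determinant to vanish, since the Leibniz expansion is blocked by some $\sigma(k)\ge k$ (resp.\ $\sigma(k)\le k$) in every permutation.
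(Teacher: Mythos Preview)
Your proof is correct. The paper states this lemma without proof, treating it as immediate from the definitions; your explicit verification via local coordinates and the strictly-triangular structure of the $\Delta$ submatrices outside the stated thresholds is exactly the intended elementary check.
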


\begin{rem}
	Note that the submatrices $	\Delta^i[T]$ and $\Delta_i[T]$ correspond to matrices $M^o_{n-i+1}$ and $M^u_{n-i+1}$ in \cite{Metelmann}, respectively.  
\end{rem}

%

Subsequently, we can invoke Kurt Metelmann's equivalences, cf. \cite{Metelmann}:
\begin{teo}[Metelmann, 1973]\label{teo:Metelmann}
	Given a banded matrix $T$, the following equivalences hold:
	\begin{enumerate}
		\item All nontrivial submatrices $\Delta_i[T], \Delta^i[T]$ are InTN.
		\item All nontrivial initial  minors 
		\begin{align*}
		\begin{aligned}
				&T\begin{pNiceMatrix}
				1 & 2 & \Cdots & r \\
				j & j+1 & \Cdots & j+r-1
			\end{pNiceMatrix}, &
			&T\begin{pNiceMatrix}
				i & i+1 & \Cdots & i+r-1 \\
				1 & 2 & \Cdots & r
			\end{pNiceMatrix},
		\end{aligned}
		\end{align*}
		are positive.
		\item There exists a nonnegative bidiagonal factorization of $T$ into
		\(T = \hat{L}_1 \cdots \hat{L}_p \cdot D \cdot \hat{U}_q \cdots \hat{U}_1 \)
		where
		\(\hat{L}_i \coloneq \begin{bNiceMatrix}[small]
			I_{p-i} & 0 \\
			0 & L_i
		\end{bNiceMatrix} \)
		is a nonnegative bidiagonal lower unitriangular matrix with $L_i$ being a positive bidiagonal lower unitriangular matrix, and
		\(\hat{U}_i \coloneq \begin{bNiceMatrix}[small]
			I_{q-i} & 0 \\
			0 & U_i
		\end{bNiceMatrix} \)
		is a nonnegative bidiagonal upper unitriangular matrix with $U_i$ being a positive bidiagonal upper unitriangular matrix, and $D$ is a positive diagonal matrix.
	\end{enumerate}
\end{teo}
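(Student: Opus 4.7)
The plan is to close the cycle by proving $(1)\Leftrightarrow(2)$ and $(2)\Leftrightarrow(3)$.

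For $(1)\Leftrightarrow(2)$, I would invoke the Gasca--Peña characterization of InTN applied to each of the nontrivial corner blocks $\Delta^i[T]$ and $\Delta_i[T]$. The leading principal minors of $\Delta^i[T]$ of size $r$ are precisely the initial row minors $T(\{1,\dots,r\};\{n-i+1,\dots,n-i+r\})$ of $T$, and as $i$ ranges over the nontrivial interval $\{n-q,\dots,n\}$ these sweep out exactly the nontrivial initial row minors listed in~(2); a symmetric statement using $\Delta_i[T]$ covers the initial column minors. The remaining nonnegativity requirements of Gasca--Peña involve noncontiguous row sequences in the corner blocks, but the band structure of $T$ collapses these to the same contiguous family already controlled, so the equivalence reduces to a bookkeeping verification together with the observation that trivial initial minors of $T$ fall automatically outside the support of any nontrivial $\Delta$.

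For $(2)\Rightarrow(3)$, which I expect to be the main obstacle, the strategy is constructive Neville elimination. Positivity of the nontrivial initial column minors of $T$ supplies multipliers that are ratios of suitable $2\times 2$ initial minors, with which one eliminates the $p$-th subdiagonal of $T$ by left-multiplying by an elementary lower bidiagonal unitriangular matrix. The band structure forces these multipliers to vanish identically in the top $p-1$ rows, producing a factor of the padded block form $\hat L_p$ with $L_p$ positive bidiagonal. Iterating $p$ times peels off $\hat L_p, \hat L_{p-1}, \dots, \hat L_1$ from the left; a symmetric argument from the right, driven by the nontrivial initial row minors, peels off $\hat U_q, \dots, \hat U_1$. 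What remains is a diagonal matrix $D$ whose entries are ratios of consecutive leading principal minors of $T$, all positive by hypothesis. The technical care lies in verifying by induction that each intermediate matrix retains its band $(p,q)$ together with the nontrivial initial-minor positivity, so that the next elimination step is legal and yields a \emph{strictly} positive (not merely nonnegative) bidiagonal factor.

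For $(3)\Rightarrow(2)$, apply the Cauchy--Binet formula to $T=\hat L_1\cdots\hat L_p\cdot D\cdot\hat U_q\cdots\hat U_1$. Any minor of $T$ expands as a sum of products of minors of the factors, and since each factor is bidiagonal its only nonzero minors are contiguous; for an initial minor of $T$ the expansion collapses to a single product of initial minors of the bidiagonal factors together with a principal minor of $D$, all manifestly positive on the support dictated by the band. Hence every nontrivial initial minor of $T$ is strictly positive, recovering~(2).
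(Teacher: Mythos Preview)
The paper does not prove this theorem at all: it is stated as Metelmann's 1973 result and simply cited, so there is no in-paper argument to compare against. What follows is therefore an assessment of your sketch on its own merits.

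Your implication $(1)\Rightarrow(2)$ is correct: the leading principal minors of each nontrivial $\Delta^i[T]$ (respectively $\Delta_i[T]$) are exactly the nontrivial initial row (respectively column) minors of $T$, and these are positive for any InTN matrix. Your outline of $(2)\Rightarrow(3)$ via Neville elimination is the standard route and is sound in spirit. However, two points deserve care. First, in $(2)\Rightarrow(1)$ your claim that ``the band structure of $T$ collapses'' the noncontiguous Gasca--Pe\~na conditions to the contiguous initial minors is not justified; a corner block $\Delta^i[T]$ is itself a full matrix (not banded), and its noncontiguous initial minors are genuine minors of $T$ with dispersed row or column indices, which condition~(2) says nothing about directly. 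The clean fix is to abandon a direct $(2)\Rightarrow(1)$ and instead close the cycle through $(3)\Rightarrow(1)$: from the factorization, $T$ is a product of TN matrices hence TN, so every $\Delta$-block is TN; its determinant is one of the nontrivial initial minors, hence positive by $(3)\Rightarrow(2)$, giving InTN. Second, in $(3)\Rightarrow(2)$ the Cauchy--Binet expansion of an initial minor of $T$ does \emph{not} in general collapse to a single product; it is a sum of nonnegative terms. What you should argue is that at least one summand---the one obtained by choosing contiguous intermediate index sets matching the band offsets---is a product of strictly positive bidiagonal minors and a positive principal minor of $D$, which forces the whole sum to be positive.
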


\begin{coro}\label{coro:Metelmann}
	If the banded matrix $T$ satisfies any of the equivalent conditions i) or ii) in Metelmann's Theorem \ref{teo:Metelmann}, then it is InTN.
\end{coro}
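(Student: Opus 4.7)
The plan is to reduce the statement to the third equivalent condition in Metelmann's Theorem~\ref{teo:Metelmann}, namely the nonnegative bidiagonal factorization
\[
T = \hat{L}_1 \cdots \hat{L}_p \cdot D \cdot \hat{U}_q \cdots \hat{U}_1,
\]
and then deduce both total nonnegativity and nonsingularity directly from this factorization. Since the corollary only requires i) or ii), the first move is simply to invoke the equivalence with iii) provided by Metelmann's Theorem; no work is needed there beyond citing the theorem.

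Once the factorization is in hand, I would argue as follows. Each factor $\hat{L}_i$ and $\hat{U}_i$ is a nonnegative bidiagonal (unitriangular) matrix, and every nonnegative bidiagonal matrix is TN because each of its minors is either zero (when the chosen rows/columns force a gap in the diagonal or subdiagonal pattern) or a product of nonnegative entries. The factor $D$ is diagonal with strictly positive diagonal, hence trivially TN. Invoking the classical fact that the product of TN matrices is TN (via the Cauchy--Binet formula applied minor by minor, see \cite{Pinkus book,Fallat}), we conclude that $T$ is TN.

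For nonsingularity, note that $\hat{L}_i$ and $\hat{U}_i$ are unitriangular, so $\det \hat{L}_i=\det \hat{U}_i=1$. Thus
\[
\det T = \det D = \prod_{k=1}^{n} D_{kk} > 0,
\]
since the diagonal entries of $D$ are strictly positive by Metelmann's statement. Combining total nonnegativity with $\det T>0$ yields $T \in$ InTN, which is exactly the claim.

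The argument is essentially a bookkeeping exercise built on Metelmann's Theorem, so there is no serious obstacle; the only subtlety is to recall that nonnegative bidiagonal matrices do lie in TN and that TN is closed under multiplication, both of which are standard. Accordingly the proof will consist of invoking \ref{teo:Metelmann}, writing down the factorization, and concluding with the two one-line observations above.
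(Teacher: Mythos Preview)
Your proposal is correct and follows essentially the same route as the paper: the paper's one-line proof also passes to the factorization in condition iii) of Theorem~\ref{teo:Metelmann} and concludes that $T$, as a product of InTN matrices, is itself InTN. You have simply unpacked this by treating the TN part (via Cauchy--Binet) and the nonsingularity part (via the determinant product) separately, which is a perfectly valid and slightly more explicit rendering of the same argument.
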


\begin{proof}
	It follows from the equivalent factorization property iii) that $T$, being a product of InTN matrices, is itself InTN.
\end{proof}

\begin{teo}
	A matrix $T$ is banded totally positive if and only if all
	 nontrivial initial  minors are positive.
\end{teo}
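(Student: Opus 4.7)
Necessity is immediate, since every nontrivial initial minor is a nontrivial minor. For sufficiency I plan a three-step reduction via Pinkus' theorem.

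First, our hypothesis is precisely condition (ii) of Metelmann's Theorem \ref{teo:Metelmann}, which grants the nonnegative bidiagonal factorization; Corollary \ref{coro:Metelmann} then delivers $T \in$ InTN, so every minor of $T$ is nonnegative. Second, by Theorem \ref{teo:BTP vs contiguous minors} it suffices to show every nontrivial contiguous minor is strictly positive, which, given the nonnegativity already in hand, amounts to ruling out vanishing. Third, I would run the Pinkus argument from the proof of Theorem \ref{teo:BTP vs contiguous minors} in reverse.

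Suppose, for contradiction, that a nontrivial contiguous minor vanishes. By Pinkus' Theorem \ref{teo:Pinkus}, some Pinkus submatrix of $T$ with parameters $(\alpha,\beta,s)$, $\alpha\neq\beta$, has its shadow containing a principal subsubmatrix of that contiguous submatrix. Every trivial contiguous submatrix of size $\geq 2$ carries a zero diagonal entry (the one placing it outside the band), hence a singular $1\times 1$ principal subsubmatrix, so it cannot be Pinkus. Consequently, the only trivial Pinkus submatrices are $1\times 1$ zero entries outside the band; a one-line index check ($j-i\geq\beta-\alpha>q$ for a diagonal entry of a nontrivial contiguous submatrix to lie in the right shadow, with the symmetric bound for the left shadow) shows their shadows cannot contain any such diagonal entry. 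So the culprit must be a \emph{nontrivial} Pinkus submatrix, and excluding such is the main obstacle.

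My plan for the obstacle is the following enlargement argument, illustrated in the case $\alpha<\beta$, so that $\beta-\alpha\in(0,q]$. Consider the initial contiguous minor with rows $\{1,\dots,\alpha+s-1\}$ and columns $\{\beta-\alpha+1,\dots,\beta+s-1\}$: its column shift $j=\beta-\alpha+1\leq q+1$ makes it nontrivial, hence positive by hypothesis. On the other hand, choosing the principal index subset $\{\alpha,\dots,\alpha+s-1\}$ inside $\{1,\dots,\alpha+s-1\}$ carves out of this enlarged submatrix precisely the Pinkus block — rows $\{\alpha,\dots,\alpha+s-1\}$, columns $\{\beta,\dots,\beta+s-1\}$ — which sits inside the right shadow of the Pinkus submatrix. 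Pinkus' theorem then forces this enlarged initial minor to vanish, contradicting the hypothesis. The case $\alpha>\beta$ is handled symmetrically, with rows $\{\alpha-\beta+1,\dots,\alpha+s-1\}$, columns $\{1,\dots,\beta+s-1\}$, and the left-shadow version of the same argument; this completes the proof.
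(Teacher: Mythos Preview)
Your proof is correct, but it takes a different route from the paper's. Both proofs share the same opening (Metelmann $\Rightarrow$ InTN, then reduce via Theorem~\ref{teo:BTP vs contiguous minors} to showing every nontrivial contiguous minor is positive) and, interestingly, both build the \emph{same} enlarged initial submatrix --- rows $\{1,\dots,\alpha+s-1\}$, columns $\{\beta-\alpha+1,\dots,\beta+s-1\}$ in your notation, which is exactly the paper's $T\begin{pNiceMatrix}[small]1&\Cdots&i+r-1\\ j-i+1&\Cdots&j+r-1\end{pNiceMatrix}$ with $(i,j,r)=(\alpha,\beta,s)$. The divergence is in how that enlargement is exploited. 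The paper applies Fischer's inequality (Theorem~\ref{teo:Classical}) to this initial submatrix, obtaining
\[
0<T\begin{pNiceMatrix}1&\Cdots&i+r-1\\ j-i+1&\Cdots&j+r-1\end{pNiceMatrix}\leq T\begin{pNiceMatrix}1&\Cdots&i-1\\ j-i+1&\Cdots&j-1\end{pNiceMatrix}\,T\begin{pNiceMatrix}i&\Cdots&i+r-1\\ j&\Cdots&j+r-1\end{pNiceMatrix},
\]
which forces the target contiguous minor to be positive directly. You instead invoke Pinkus' Theorem~\ref{teo:Pinkus} twice: first to locate a Pinkus block responsible for a hypothetical vanishing contiguous minor, and second to argue that the enlarged initial minor must itself vanish because it contains that Pinkus block as a principal subsubmatrix in the appropriate shadow. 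The Fischer route is shorter and more elementary (and is reused later in the proof of Theorem~\ref{teo:Deltas}); your Pinkus route is conceptually uniform with the proof of Theorem~\ref{teo:BTP vs contiguous minors} and makes explicit that the hypothesis kills every nontrivial Pinkus submatrix, which is a slightly sharper structural statement than what Fischer alone yields.
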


\begin{proof}
	For necessity, we note that initial minors are contiguous minors and therefore are positive.
	
	For sufficiency, we proceed as follows.   Since, according to Corollary \ref{coro:Metelmann}, the matrix $T$ is  InTN, the application of Fischer's inequality (cf. Theorem \ref{teo:Classical}) will lead to the desired conclusion.
	
We choose $j$ and $i$ such that the contiguous submatrices are nontrivial.		Consider the inequality:
	\begin{align}\label{eq:leq1}
		T\begin{pNiceMatrix}
			1 &\Cdots & i+r-1\\
			j-i+1&\Cdots & j+r-1
		\end{pNiceMatrix}\leq T\begin{pNiceMatrix}
			1 &\Cdots & i-1\\
			j-i+1&\Cdots & j-1
		\end{pNiceMatrix}T\begin{pNiceMatrix}
			i&i+1 &\Cdots & i+r-1\\
			j&j+1&\Cdots & j+r-1
		\end{pNiceMatrix},
	\end{align}
	which implies that if all nontrivial initial minors $T\begin{pNiceMatrix}[small]
		1&	2 &\Cdots & r\\
		j&	j+1&\Cdots & j+r-1
	\end{pNiceMatrix}$ do not vanish, then Equation \eqref{eq:leq1} ensures that all nontrivial contiguous submatrices above the main diagonal are nonsingular.
	
	Similarly, from Fischer's inequality we obtain:
	\begin{align}\label{eq:leq2}
		T\begin{pNiceMatrix}
			i-j+1 &\Cdots & i+r-1\\
			1&\Cdots & j+r-1
		\end{pNiceMatrix}\leq T\begin{pNiceMatrix}
			i&i+1 &\Cdots & i+r\\
			j&j+1&\Cdots & j+r
		\end{pNiceMatrix}T\begin{pNiceMatrix}
			i-j+1 &\Cdots & i-1\\
			1&\Cdots & j-1
		\end{pNiceMatrix},
	\end{align}
	which ensures that if all nontrivial initial  $T\begin{pNiceMatrix}[small]
		i&	i+1&\Cdots & i+r-1\\
		1&	2 &\Cdots & r
	\end{pNiceMatrix}$ are nonsingular, then all nontrivial contiguous submatrices above the main diagonal are nonsingular.
	
	Finally,  Theorem \ref{teo:BTP vs contiguous minors} leads to the result.
\end{proof}
%
		 
In Ando's seminal work \cite{Ando}, the text highlights a significant insight: \emph{"A criterion for total positivity of a band matrix is found in Metelmann (1973)"}, referencing \cite{Metelmann}. It's worth noting that Ando employs the term "total positive" where we typically use "total nonnegative," while what Ando terms ``strict total positive'' aligns with our understanding of "total positivity." However, as our analysis demonstrates, Metelmann's theorem effectively characterizes  banded totally positive matrices, or in Ando's terminology, banded  strictly totally nonnegative matrices.

\begin{teo}\label{teo:Mettelmann_modificdo}
	Given a banded matrix $T$, the following equivalences hold:
	\begin{enumerate}
		\item $T$ is a banded totally positive matrix.
		\item All nontrivial submatrices $\Delta_i[T], \Delta^i[T]$ are InTN.
		\item All nontrivial initial  minors 
		\begin{align*}
		\begin{aligned}
				&T\begin{pNiceMatrix}
				1 & 2 & \Cdots & r \\
				j & j+1 & \Cdots & j+r-1
			\end{pNiceMatrix}, &
			&T\begin{pNiceMatrix}
				i & i+1 & \Cdots & i+r-1 \\
				1 & 2 & \Cdots & r
			\end{pNiceMatrix},
		\end{aligned}
		\end{align*}
		are positive.
		\item There exists a nonnegative bidiagonal factorization of $T$ into
		\(T = \hat{L}_1 \cdots \hat{L}_p \cdot D \cdot \hat{U}_q \cdots \hat{U}_1 \)
		where
		\(\hat{L}_i \coloneq \begin{bNiceMatrix}[small]
			I_{p-i} & 0 \\
			0 & L_i
		\end{bNiceMatrix} \)
		is a nonnegative bidiagonal lower unitriangular matrix with $L_i$ being a positive bidiagonal lower unitriangular matrix, and
		\(\hat{U}_i \coloneq \begin{bNiceMatrix}[small]
			I_{q-i} & 0 \\
			0 & U_i
		\end{bNiceMatrix} \)
		is a nonnegative bidiagonal upper unitriangular matrix with $U_i$ being a positive bidiagonal upper unitriangular matrix, and $D$ is a positive diagonal matrix.
	\end{enumerate}
\end{teo}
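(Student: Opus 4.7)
The plan is to observe that this theorem is essentially a repackaging: Metelmann's Theorem \ref{teo:Metelmann} already delivers the equivalence of (ii), (iii), and (iv) for any banded matrix $T$, and the preceding theorem in the text establishes the equivalence (i) $\iff$ (iii). So the proof assembles these two results into a single cycle of implications.

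Concretely, I would structure the argument as follows. First, note that Metelmann's Theorem \ref{teo:Metelmann} gives unconditionally that (ii) $\iff$ (iii) $\iff$ (iv) for every banded matrix, so nothing more needs to be said about that portion of the chain. Second, invoke the theorem immediately preceding (the one asserting that a banded matrix is banded totally positive if and only if all nontrivial initial minors are positive) to get (i) $\iff$ (iii). Chaining these yields (i) $\iff$ (ii) $\iff$ (iii) $\iff$ (iv), which is the desired equivalence.

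If one wishes to make the reasoning more self-contained, I would briefly sketch the two directions explicitly. For (i) $\Rightarrow$ (iii), every nontrivial initial minor is in particular a nontrivial contiguous minor, hence positive by definition of BTP. For (iii) $\Rightarrow$ (i), apply Corollary \ref{coro:Metelmann} to conclude that $T$ is InTN, and then exploit Fischer's inequality (Theorem \ref{teo:Classical}) in the two forms \eqref{eq:leq1} and \eqref{eq:leq2} to upgrade positivity of nontrivial initial minors to positivity of all nontrivial contiguous minors; finally Theorem \ref{teo:BTP vs contiguous minors} delivers banded total positivity. The passage to (ii) and (iv) is then handled by quoting Metelmann's theorem verbatim.

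There is no genuine obstacle here — the nontrivial analytic work (the Fischer-inequality argument bridging initial and contiguous minors, and Metelmann's bidiagonal factorization result) has already been done earlier in the section. The only point requiring mild care is to make sure that the word ``nontrivial'' is used consistently in both citations: Metelmann's original statement is phrased for banded matrices and its notion of ``trivial'' agrees with ours (minors vanishing for purely structural reasons due to the band), so the transport of his conclusions to our language is immediate, as was already noted in the remark identifying $\Delta^i[T]$ and $\Delta_i[T]$ with Metelmann's $M^o_{n-i+1}$ and $M^u_{n-i+1}$.
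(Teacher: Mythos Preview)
Your proposal is correct and matches the paper's approach exactly: the paper states this theorem without a separate proof, treating it as the evident amalgamation of Metelmann's Theorem \ref{teo:Metelmann} (giving (ii) $\iff$ (iii) $\iff$ (iv)) with the immediately preceding theorem (giving (i) $\iff$ (iii)). Your optional self-contained sketch of (i) $\Leftrightarrow$ (iii) via Corollary \ref{coro:Metelmann}, Fischer's inequality, and Theorem \ref{teo:BTP vs contiguous minors} also faithfully reproduces the argument the paper already gave for that preceding theorem.
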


We aim to refine the characterization of banded positive matrices, drawing inspiration
 from Boris and Michael Shapiro's paper \cite{Shapiro}, with a significant reduction in the number of minors to verify to just \(p+q+2\).
  Importantly, the number of non-zero minors to verify depends solely on the width of the band, rather than the size of the matrix. 
 
 It's worth noting that in \cite{Shapiro}, trivial submatrices are identified as those with no row or column below the main diagonal. However, there exist other trivial submatrices not fitting into this category. For instance, consider an upper triangular matrix \(T\) with \(n\geq 4\), which presents the trivial submatrix:
\[
T	\begin{bNiceMatrix}
	1 &3 & 4\\
	1 &2 &4
\end{bNiceMatrix}=\begin{bNiceMatrix}
	* &* &*\\
	0 &0 &*\\
	0 &0 &*
\end{bNiceMatrix}
\]
This submatrix, is trivially singular, but lacks a row or column below the main diagonal.

\begin{teo}\label{teo:Deltas}
	A matrix \(T\in\)InTN is BTP if and only if 
	all nontrivial \(\Delta\) initial  minors are nonzero.
\end{teo}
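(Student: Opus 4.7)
The plan is to invoke equivalence (ii) of Theorem \ref{teo:Mettelmann_modificdo}, which characterizes BTP as exactly those banded matrices for which each nontrivial $\Delta_i[T]$ and $\Delta^i[T]$ is itself InTN. The task thus reduces to showing, under the present hypotheses, that these $\Delta$ submatrices are InTN.

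Necessity is immediate: if $T$ is BTP, every nontrivial minor of $T$ is positive, and the nontrivial $\Delta$ initial minors are a subset of these, hence nonzero.

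For sufficiency, suppose $T\in$ InTN and that every nontrivial $\Delta$ initial minor is nonzero. Fix $M$ to be any of the nontrivial submatrices $\Delta^i[T]$ (with $i\in\{n-q,\dots,n\}$) or $\Delta_i[T]$ (with $i\in\{n-p,\dots,n\}$). Since every minor of $M$ is a minor of $T$ for a particular choice of rows and columns, and $T$ is TN, every minor of $M$ is nonnegative; that is, $M$ is TN. Moreover, $\det M$ is precisely the corresponding nontrivial $\Delta$ initial minor, which is nonzero by hypothesis, and therefore strictly positive because $M$ is TN. Thus $M$ is a nonsingular TN matrix, i.e., $M\in$ InTN. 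Condition (ii) of Theorem \ref{teo:Mettelmann_modificdo} is therefore satisfied, and $T$ is BTP.

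The argument is mostly bookkeeping once Theorem \ref{teo:Mettelmann_modificdo} is available; the only point requiring explicit mention is the elementary fact that submatrices of TN matrices are TN, together with the observation that a nonzero minor of a TN matrix is necessarily positive. I do not anticipate any genuine obstacle beyond isolating these two remarks and quoting the prior theorem.
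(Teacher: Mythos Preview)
Your proof is correct and matches the paper's first proof essentially verbatim: both invoke condition~(ii) of Theorem~\ref{teo:Mettelmann_modificdo}, observing that submatrices of a TN matrix are TN and that the nonvanishing hypothesis makes the nontrivial $\Delta$ submatrices nonsingular, hence InTN. The paper additionally supplies a second, independent proof of sufficiency via Fischer's inequality (Theorem~\ref{teo:Classical}), bounding each nontrivial $\Delta$ minor above by a product containing an arbitrary nontrivial contiguous minor and then appealing to Theorem~\ref{teo:BTP vs contiguous minors}; this route avoids Metelmann's factorization but is longer.
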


\begin{proof}
	For the necessity part, it's notable that if all nontrivial minors do not vanish, then the nontrivial \(\Delta\)'s do not vanish either.
	
	To demonstrate sufficiency , we present two proofs. The first one relies on Metelmann's Theorem \ref{teo:Mettelmann_modificdo}. Since the matrix is InTN, all its submatrices inherit this property, ensuring that the initial submatrices, denoted as $\Delta$, are TN. However, they are also assumed to be nonsingular. This implies that all these submatrices belong to InTN. Therefore, according to Theorem \ref{teo:Mettelmann_modificdo}, the matrix is deemed to be BTP.

The second proof draws inspiration from \cite{Shapiro}. Given the hypothesis that the matrix is InTN, we leverage Fisher's inequalities. Our approach initiates by assuming the nonzero status of all  \(\Delta\) nontrivial initial minors. Subsequently, we delve into the analysis of nontrivial contiguous minors, as expounded in Theorem \ref{teo:BTP vs contiguous minors}. Our aim is to demonstrate that if all nontrivial \(\Delta\) minors are nonzero, then the same holds true for all nontrivial contiguous minors.
	
	We pick a nontrivial contiguous minor 
	\[
	T\begin{pNiceMatrix}
		i     & i+1   & \Cdots & i+r-1 \\
		j     & j+1   & \Cdots & j+r-1
	\end{pNiceMatrix},
	\]
	with \(j > i\). 
	
	Now, we consider
	\[
	\Delta^{n-j+i} (T)= T\begin{pNiceMatrix}
		1         & \Cdots & n-j+i   \\
		j-i+1    & \Cdots & n
	\end{pNiceMatrix},
	\]
	and Fisher's inequality in Theorem \ref{teo:Classical} and Equation \eqref{eq:leq1} leads to
	\begin{align*}
	\Delta^{n-j+i}(T)& \leq T\begin{pNiceMatrix}
		1     & \Cdots & i+r-1 \\
		j-i+1 & \Cdots & j+r-1
	\end{pNiceMatrix}T\begin{pNiceMatrix}
		i+r   & \Cdots & n-j+i \\
		j+r   & \Cdots & n
	\end{pNiceMatrix} \\&\leq
	T\begin{pNiceMatrix}
		1     & \Cdots & i-1 \\
		j-i+1 & \Cdots & j-1
	\end{pNiceMatrix}T\begin{pNiceMatrix}
		i     & i+1   & \Cdots & i+r-1 \\
		j     & j+1   & \Cdots & j+r-1
	\end{pNiceMatrix}
	T\begin{pNiceMatrix}
		i+r   & \Cdots & n \\
		j+r   & \Cdots & n-j+i
	\end{pNiceMatrix}.
	\end{align*}
	As a result, when all nontrivial \(\Delta\)s are nonzero, all nontrivial contiguous minors above the main diagonal are also nonzero.
	
	The case \(j < i\) is treated analogously.  Theorem \ref{teo:Classical} and Equation \eqref{eq:leq2} gives
	\begin{align*}
	\Delta_{n-i+j} (T)&\coloneq T\begin{pNiceMatrix}
		i-j +1 & \Cdots & n \\
		1      & \Cdots & n-i+j
	\end{pNiceMatrix}\\&\leq T\begin{pNiceMatrix}
		i-j +1 & \Cdots & i+r-1 \\
		1      & \Cdots & j+r-1
	\end{pNiceMatrix}T\begin{pNiceMatrix}
		i     & i+1   & \Cdots & i+r-1 \\
		j     & j+1   & \Cdots & j+r-1
	\end{pNiceMatrix}T\begin{pNiceMatrix}
		i+r   & \Cdots & n \\
		j+r   & \Cdots & n-i+j
	\end{pNiceMatrix}.
	\end{align*}
	Hence, if all nontrivial \(\Delta\)'s are nonzero, then all nontrivial contiguous minors below the main diagonal are also nonzero.
	
	Therefore, we deduce that all nontrivial contiguous minors are also nonzero, and as a consequence, Theorem \ref{teo:BTP vs contiguous minors} yields the desired result.
\end{proof}

Now, let's outline a figure to illustrate the connection between a contiguous minor and its corresponding \(\Delta\):
\begin{center}
	
	\drawmatrixset{bbox style={fill=Gray!30}}
	$\drawmatrix[bbox/.append, ,scale=8,banded, width=.8,
	]\Delta$
	
	\vspace*{-7cm}\hspace{1cm}
	$
	\drawmatrix[scale=1.5,draw=RoyalBlue,thick,fill=RoyalBlue!10]{T_1}$ 
	
	\vspace*{-2.6cm}
	\hspace{-1.4cm}$\drawmatrix[scale=1.5,draw=PineGreen,thick,height=.68,width=.68]{T_2}$
	
	\vspace*{1.45cm}
	\hspace{4.5cm}$\drawmatrix[scale=1.5,draw=PineGreen,thick,height=1.25,width=1.25]{T_3}$
	
	%
	%
	%
	%
	\vspace*{-4.5cm}
	\hspace*{-.67cm}	
	\begin{tikzpicture}
		\draw[ultra thick,BrickRed] (-4,6)--(-4,1.54)--(0.43,1.54);
		
	\end{tikzpicture}
	\hspace*{-2.62cm}	
\end{center}

\vspace*{3.5cm}

In the diagram provided, \(T_1\) denotes a nontrivial contiguous submatrix situated above the diagonal, while \(T_2\) represents the square block corresponding to the first application of Theorem \ref{teo:Classical}. Additionally, \(T_3\) signifies the diagonal block for the second application of Theorem \ref{teo:Classical}. Hence, it follows that \(\Delta \leq \text{det}(T_1) \cdot \text{det}(T_2) \cdot \text{det}(T_3)\). For \(j < i\), a straightforward reflection symmetry along the main diagonal in the depicted figure above provides a suitable scheme to describe the situation.

Given two banded matrices \(T_i\in B_{(p_i,q_i)}\), \(i\in\{1,2\}\), their products are also banded matrices with \(T_1T_2, T_2T_1\in  B_{(p_1+p_2,q_1+q_2)}\). Consequently, a pertinent question arises: what happens when we multiply two banded totally positive matrices? Is the product still a banded totally positive matrix? We will see in the next theorem that the Cauchy--Binet formula and Theorem \ref{teo:BTP vs contiguous minors} lead to a positive answer.

\begin{teo}\label{teo:semigroup}
	The set of banded totally positive matrices forms a semigroup.
\end{teo}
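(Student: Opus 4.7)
The plan is to combine Theorem \ref{teo:BTP vs contiguous minors} with the Cauchy--Binet formula. Let $T_1\in\text{B}_{p_1,q_1}\text{TP}$ and $T_2\in\text{B}_{p_2,q_2}\text{TP}$; the product $T_1T_2$ lies in $\text{B}_{p_1+p_2,q_1+q_2}$, so it suffices to verify that every nontrivial contiguous minor of $T_1T_2$ is strictly positive. I fix such a minor $(T_1T_2)(\boldsymbol{i};\boldsymbol{j})$ with $\boldsymbol{i}=\{i,\dots,i+r-1\}$ and $\boldsymbol{j}=\{j,\dots,j+r-1\}$, where nontriviality in the product band amounts to $i-(p_1+p_2)\leq j\leq i+(q_1+q_2)$.

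By the Cauchy--Binet formula,
\[
(T_1T_2)(\boldsymbol{i};\boldsymbol{j})=\sum_{\boldsymbol{k}\in\mathscr{Q}(r,n)}T_1(\boldsymbol{i};\boldsymbol{k})\,T_2(\boldsymbol{k};\boldsymbol{j}).
\]
A BTP matrix is TN, because the trivial minors vanish identically and the nontrivial ones are positive; consequently every summand above is nonnegative. It therefore suffices to exhibit a single $\boldsymbol{k}\in\mathscr{Q}(r,n)$ for which both $T_1[\boldsymbol{i};\boldsymbol{k}]$ and $T_2[\boldsymbol{k};\boldsymbol{j}]$ are nontrivial submatrices of their respective BTP matrices, since that lone summand will already produce a strictly positive contribution.

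I look for $\boldsymbol{k}$ contiguous, $\boldsymbol{k}=\{k,\dots,k+r-1\}$. Nontriviality of $T_1[\boldsymbol{i};\boldsymbol{k}]$ is then equivalent to $i-p_1\leq k\leq i+q_1$, nontriviality of $T_2[\boldsymbol{k};\boldsymbol{j}]$ is equivalent to $j-q_2\leq k\leq j+p_2$, and membership of $\boldsymbol{k}$ in $\mathscr{Q}(r,n)$ requires $1\leq k\leq n-r+1$. The natural candidate is
\[
k\coloneq\max(1,\;i-p_1,\;j-q_2),
\]
and the verification that $k\leq\min(n-r+1,\,i+q_1,\,j+p_2)$ reduces to nine elementary comparisons; each of them follows directly from $i-(p_1+p_2)\leq j\leq i+(q_1+q_2)$ together with $1\leq i,j\leq n-r+1$, which hold because $\boldsymbol{i},\boldsymbol{j}\in\mathscr{Q}(r,n)$.

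The only genuinely delicate point, and thus the main (minor) obstacle, is the boundary bookkeeping in this last step: when $i$ or $j$ is close to $1$ or to $n-r+1$, one must check that the forced cut-offs at $1$ and at $n-r+1$ do not lift $k$ above $\min(i+q_1,j+p_2)$. Once this is settled, the corresponding Cauchy--Binet summand $T_1(\boldsymbol{i};\boldsymbol{k})\,T_2(\boldsymbol{k};\boldsymbol{j})$ is a product of two positive nontrivial minors of BTP matrices, hence positive, so $(T_1T_2)(\boldsymbol{i};\boldsymbol{j})>0$, and Theorem \ref{teo:BTP vs contiguous minors} delivers $T_1T_2\in\text{BTP}$, proving closure.
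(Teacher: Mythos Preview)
Your proposal is correct and follows essentially the same approach as the paper: reduce via Theorem~\ref{teo:BTP vs contiguous minors} to nontrivial contiguous minors, expand by Cauchy--Binet, observe all summands are nonnegative, and exhibit one contiguous intermediate index $\boldsymbol{k}$ making both factors nontrivial. The paper handles only the extreme cases $j=i-p_1-p_2$ and $j=i+q_1+q_2$ explicitly and waves at the rest, whereas you give a uniform candidate $k=\max(1,i-p_1,j-q_2)$ and flag the boundary inequalities; either way the argument is the same.
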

\begin{proof}
	According to Theorem \ref{teo:BTP vs contiguous minors}, it suffices to show that the nontrivial contiguous minors of the product \(T=T_1T_2\) of two banded totally positive matrices \(T_1\) and \(T_2\) do not vanish. For this purpose, we utilize the Cauchy--Binet theorem \cite{Fallat}. For the constituent minors, we have:
	\begin{align*}
		T\begin{pNiceMatrix}
			i&	i+1 &\Cdots & i+r-1\\
			j&	j+1&\Cdots & j+r-1
		\end{pNiceMatrix}=\sum_{\gamma} T_1\begin{pNiceMatrix}
			i&	i+1 &\Cdots & i+r-1\\
			\gamma_1&\gamma_2& \Cdots & \gamma_r
		\end{pNiceMatrix}T_2\begin{pNiceMatrix}
			\gamma_1 &\gamma_2&\Cdots & \gamma_r\\
			j&	j+1&\Cdots & j+r-1
		\end{pNiceMatrix}.
	\end{align*}
	It suffices to demonstrate that there exists at least one product in the right-hand side sum that is nonzero, implying that both factors are nonzero. For instance, when \(j=i-p_1-p_2\), we set
	\begin{align*}
		\begin{aligned}
			\gamma_k&=i+k-p_1-1, &  k&\in\{1,\dots, r\},
		\end{aligned}
	\end{align*}
	and all factors will not be zero by the assumption on \(T_1\) and \(T_2\). Similarly, for \(j=i+q_1+q_2\), we choose
	\begin{align*}
		\begin{aligned}
			\gamma_k&=i+k+q_1+1, &  k&\in\{1,\dots, r\},
		\end{aligned}
	\end{align*}
	and again all factors will not be zero by the assumptions on \(T_1\) and \(T_2\). Analogous arguments apply for all the minors in between these two, as well as for the other nontrivial contiguous minors. Hence, since all nontrivial contiguous minors are nonzero, the product is a banded totally positive matrix.
\end{proof}
		
\begin{lemma}
		For $(L)_{i,i-1}, (U)_{i-1,n}>0$, $i\in\{1,\dots,n\}$, the following bidiagonal matrices:
\begin{align}\label{eq:bidiagonal}
	L&=\begin{bNiceMatrix}
		1 & 0&\Cdots &&0\\
		(L)_{1,0}& 1&\Ddots&&\Vdots\\
		0&(L)_{2,1}&1 &&\\
		\Vdots&\Ddots&\Ddots[shorten-end=-8pt]&\Ddots&0\\
		0&\Cdots  &0 &(L)_{n-1,n-2}&1
	\end{bNiceMatrix}, &
		U&=\begin{bNiceMatrix}
		1 & (U)_{1,0}&0 &\Cdots&0\\
		0& 1&(U)_{2,1}&\Ddots&\Vdots\\
		\Vdots&\Ddots&1 &\Ddots[shorten-end=-20pt]&0\\
		\Vdots&&\Ddots[shorten-end=-8pt]&\Ddots&(U)_{n-2,n-1}\\
		0&\Cdots  &&0&1
	\end{bNiceMatrix}, 
\end{align}
are banded totally positive matrices.
\end{lemma}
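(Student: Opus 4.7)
The plan is to reduce the statement to a direct application of Theorem~\ref{teo:Mettelmann_modificdo}, exploiting the equivalence between being banded totally positive and admitting the positive bidiagonal factorization listed as item iv).

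For the lower bidiagonal matrix $L$ one has $L\in\text{B}_{1,0}$, so with $p=1$ and $q=0$ the factorization promised by item iv) of Theorem~\ref{teo:Mettelmann_modificdo} reads $L=\hat L_1\cdot D$, where $D$ is a positive diagonal matrix and $\hat L_1=L_1$ is a positive bidiagonal lower unitriangular matrix (for $i=p=1$ the identity block $I_{p-i}=I_0$ vanishes, so $\hat L_1=L_1$). The matrix $L$ displayed in \eqref{eq:bidiagonal} is already of exactly this form with the choice $D=I_n$ and $L_1=L$: its diagonal entries are ones and its subdiagonal entries $(L)_{i,i-1}$ are strictly positive by hypothesis. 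Hence condition iv) of Theorem~\ref{teo:Mettelmann_modificdo} holds, and by the equivalence iv)$\Leftrightarrow$i) we conclude that $L$ is banded totally positive.

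The argument for $U$ is entirely symmetric: now $U\in\text{B}_{0,1}$ with $p=0$, $q=1$, so the factorization of item iv) reads $U=D\cdot \hat U_1$ with $D=I_n$ and $\hat U_1=U_1=U$, and by assumption $U$ is a positive bidiagonal upper unitriangular matrix. The same equivalence then yields that $U$ is banded totally positive.

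If one prefers a self-contained verification not appealing to iv), the conclusion can also be obtained through Theorem~\ref{teo:BTP vs contiguous minors}. For $L$ the nontrivial contiguous submatrices are those indexed by $j\in\{i-1,i\}$: when $j=i$ the submatrix is lower unitriangular, hence has determinant $1$; when $j=i-1$, using that $(L)_{k,l}=0$ for $l<k-1$ or $l>k$, one checks that the submatrix is upper bidiagonal with the positive entries $(L)_{i+k-1,i+k-2}$ on its main diagonal and ones on the superdiagonal, so its determinant equals $\prod_{k=1}^{r}(L)_{i+k-1,i+k-2}>0$. An analogous description works for $U$. The expected main ``obstacle'' is therefore essentially bookkeeping: correctly matching the shape in \eqref{eq:bidiagonal} with the template $\hat L_1$, respectively $\hat U_1$, and keeping track of which contiguous minors are nontrivial for bandwidth $(1,0)$ or $(0,1)$.
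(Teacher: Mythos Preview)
Your proof is correct, but it follows a different path from the paper's. The paper first invokes the Gasca--Pe\~na criterion to establish that $L$ (and $U$) lie in InTN, and then applies Theorem~\ref{teo:Deltas}: it computes the nontrivial $\Delta$ minors explicitly, obtaining $\Delta^n(L)=\Delta_n(L)=1$ and $\Delta_{n-i}(L)=(L)_{1,0}(L)_{2,1}\cdots(L)_{n-i,n-i-1}>0$, and concludes BTP from the $\Delta$-criterion. Your primary argument instead observes that $L$ and $U$ are \emph{themselves} already in the canonical bidiagonal form of item iv) in Theorem~\ref{teo:Mettelmann_modificdo} (with $D=I_n$ and a single unitriangular factor), so the equivalence iv)$\Leftrightarrow$i) gives BTP immediately---no minor computation needed. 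This is arguably the slickest route available once Theorem~\ref{teo:Mettelmann_modificdo} has been stated. Your alternative direct verification via Theorem~\ref{teo:BTP vs contiguous minors} is also correct and closer in spirit to the paper's explicit-minor style, though the paper's choice of the $\Delta$-criterion checks fewer minors (only $p+q+2$ of them) at the cost of first establishing InTN.
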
 
\begin{proof}
All contiguous row or column submatrices of triangular matrices are themselves triangular matrices. By the Gasca--Peña criterion, we can establish that these bidiagonal matrices belong to the class of InTN matrices. Consequently, the result follows immediately from Theorem \ref{teo:Deltas}.

For example, consider the lower triangular matrix \(L\). We observe that \(\Delta^n(L)=\Delta_{n}(L)=1\), and for \(i \in \{1,2,\dots,n-1\}\), we have:
\[
\Delta_{n-i}(L) = (L)_{1,0}  (L)_{2,1} \cdots   (L)_{n-i,n-i-1} > 0.
\]
These are the nontrivial \(\Delta\)s, and as they are all nonzero, we can conclude that \(L\) is banded totally positive. A similar argument can be applied to the upper triangular matrix \(U\).
\end{proof}

\begin{defi}
A matrix \(T\) is said to be PBF (positive of bidiagonal factorization) if it can be expressed as:
\begin{align}\label{eq:PBF}
	T=L_1\cdots L_p D U_q\cdots U_1
\end{align}
where \(L_i\) and \(U_i\) are positive bidiagonal matrices as defined in \eqref{eq:bidiagonal}, and \(D\) is a positive diagonal matrix.
\end{defi}

\begin{pro}\label{pro:PBF_BTP}
	PBF matrices are banded totally positive matrices.
\end{pro}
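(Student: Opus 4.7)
The plan is to combine the two results immediately preceding the proposition: the previous lemma, which establishes that each positive bidiagonal factor $L$ or $U$ of the form \eqref{eq:bidiagonal} is banded totally positive, together with Theorem \ref{teo:semigroup}, which asserts that BTP matrices form a semigroup under multiplication. Given the decomposition
\[
T = L_1 \cdots L_p \cdot D \cdot U_q \cdots U_1,
\]
it suffices to show that every factor on the right-hand side lies in BTP, and then invoke the semigroup property inductively.

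First I would handle the factors $L_i$ and $U_j$: each of them is a positive bidiagonal matrix as in \eqref{eq:bidiagonal}, so by the preceding lemma they belong to BTP (specifically, $L_i \in \text{B}_{1,0}$TP and $U_j \in \text{B}_{0,1}$TP). Next I would treat the diagonal factor $D$: it lies in $\text{B}_{0,0}$, so its only nontrivial submatrices are the diagonal blocks of the form $D[\{k\};\{k\}]$, whose determinants are just the positive diagonal entries of $D$; hence $D \in \text{B}_{0,0}$TP trivially.

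Finally, applying Theorem \ref{teo:semigroup} repeatedly, the product $L_1 L_2 \in \text{BTP}$, then $L_1 L_2 L_3 \in \text{BTP}$, and so on, so that $L_1\cdots L_p \cdot D \cdot U_q \cdots U_1 \in \text{BTP}$. This yields $T \in \text{BTP}$, as desired. There is no real obstacle here, since all the heavy lifting has been done in the earlier lemma (positivity of the bidiagonal factors) and in the semigroup theorem (stability of BTP under products); the proposition is essentially a direct corollary of these two results.
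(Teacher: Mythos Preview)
Your proof is correct and follows essentially the same approach as the paper: the paper's proof is the one-line observation that all factors in the PBF factorization are banded totally positive, so their product is as well. Your version spells out the role of each factor (including the diagonal $D$) and the inductive application of Theorem~\ref{teo:semigroup}, but the underlying argument is identical.
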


\begin{proof}
	Since all the factors in the PBF factorization are banded totally positive, their product is also banded totally positive.
\end{proof}

	\begin{defi}[Darboux transformations of banded matrices]
	Let us assume that $T$ admits a bidiagonal factorization \eqref{eq:PBF}). We consider a chain of new auxiliary matrices, called Darboux transformations, given by the contiguous permutation of the unitriangular matrices in the factorization \eqref{eq:PBF}
	\begin{align*}
			\hat T^{[+1]}&=L_2L_3   \cdots L_p  D U_q^{[N]} \cdots U_1  L_1 , \\
			\hat T^{[+2]}&=L_3  \cdots L_p     D   U_q  \cdots U_1 L_1 L_2  , \\ &\hspace*{5pt} \vdots\\
			\hat T^{[+(p-1)]}&=L_p     D   U_q  \cdots U_1 L_1 L_2   \cdots L_{p-1}  ,\\
			\hat T^{[+p]}&=   D   U_q  \cdots U_1 L_1 L_2   \cdots L_{p}  ,
	\end{align*}
	and 
	\begin{align*}
			\hat T^{[-1]}&=U_1 L_1  \cdots L_p     D   U_q  \cdots U_3U_2  , \\
			\hat T^{[-2]}&=U_2 U_1 L_1 \cdots L_p     D   U_q  \cdots U_3 , \\ &\hspace*{5pt} \vdots\\
			\hat T^{[-(q-1)]}&= U_{q-1}  \cdots U_1 L_1 L_2   \cdots L_{p}    D  U_{q}  ,\\
			\hat T^{[-q]}&= U_{q}  \cdots U_1 L_1 L_2   \cdots L_{p}    D   .
	\end{align*}
\end{defi}

\begin{pro}
	Darboux transformations of PBF matrices are banded totally positive matrices.
\end{pro}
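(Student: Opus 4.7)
The plan is to observe that every Darboux transformation listed in the definition is, by construction, a finite product of matrices that we have already identified as banded totally positive, and then invoke the semigroup property of the class BTP.

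First, I would record the building blocks. By the preceding lemma, each positive lower bidiagonal factor $L_i$ and each positive upper bidiagonal factor $U_j$ belongs to BTP. The positive diagonal matrix $D$ is trivially BTP: it is a banded matrix with $p=q=0$, so its only nontrivial minors are the $1\times 1$ diagonal entries, which are strictly positive by definition of PBF. Thus every single factor appearing in any of the expressions $\hat T^{[\pm k]}$ is banded totally positive. (Equivalently, the factorization of each $\hat T^{[\pm k]}$ is itself a PBF factorization, so Proposition \ref{pro:PBF_BTP} applies directly, but the semigroup route is cleaner because it avoids reordering the factors to fit the PBF pattern $L_1\cdots L_p D U_q\cdots U_1$.)

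Next, I would apply Theorem \ref{teo:semigroup} inductively. The theorem states that the product of two BTP matrices is BTP; a straightforward induction then gives that any finite product of BTP matrices is BTP. Since
\[
\hat T^{[+k]} = L_{k+1}\cdots L_p \cdot D \cdot U_q \cdots U_1 \cdot L_1 \cdots L_k, \qquad k\in\{1,\dots,p\},
\]
and
\[
\hat T^{[-k]} = U_k\cdots U_1 \cdot L_1 \cdots L_p \cdot D \cdot U_q \cdots U_{k+1}, \qquad k\in\{1,\dots,q\},
\]
are such finite products, each one lies in BTP.

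There is no real obstacle here: the statement is essentially a corollary of the semigroup theorem together with the lemma identifying bidiagonal positive factors as BTP. The only point that deserves a brief remark is that the cyclic rearrangement of the factors does not, a priori, preserve the ordered PBF shape $L_1\cdots L_p D U_q \cdots U_1$ (for instance, in $\hat T^{[+1]}$ a lower bidiagonal factor $L_1$ appears to the right of the upper bidiagonal factors), which is precisely why one must invoke the semigroup result rather than reuse Proposition \ref{pro:PBF_BTP} verbatim.
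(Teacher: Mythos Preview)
Your argument is correct and coincides with the paper's own proof, which simply cites Theorem \ref{teo:semigroup} together with Proposition \ref{pro:PBF_BTP}; you have just unpacked that citation explicitly. One small inaccuracy: for the diagonal factor $D\in\text{B}_{0,0}$ the nontrivial minors are not only the $1\times 1$ diagonal entries but all principal minors (the condition $\boldsymbol i=\boldsymbol j$); these are products of positive diagonal entries, so the conclusion stands.
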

\begin{proof}
	If follows from Theorem \ref{teo:semigroup} and Proposition \ref{pro:PBF_BTP}.
\end{proof}

\section{Normality  of  the recursion polynomials of banded totally positive matrices}

Let's introduce the recursion polynomials associated with a semi-infinite banded matrix $T\in$B$_{p,q}$:
\begin{defi}\label{def:Lambda-Upsilon}
	Given a semi-infinite matrix  $T\in$B$_{p,q}$, where each leading principal submatrix is in \(\text{B}_{(p,q)}\text{TP}\), we consider the corresponding left recursion matrix polynomial \( A(x)\in\R^{p\times \infty}[x] \), a \( p \times \infty \) matrix of polynomials, where \( A(x)T = xA(x) \), and a right recursion matrix polynomial \( B \in\R^{\infty\times q}[x] \), an \( \infty \times q \) matrix of polynomials, where \( TB(x) = xB(x) \). We arrange the matrix polynomial \( A \) by successive \( p \times p \) blocks \( A_{np} \), and the matrix \( B \) by successive \( q \times q \) blocks \( B_{nq} \). 
\end{defi}

The entries in these recursion matrices are polynomials:
\begin{align*}
	A&=\left[\begin{NiceMatrix}
		A_0 &A_p& A_{2p}&\Cdots
	\end{NiceMatrix}\right], &A_{Np}&=\begin{bNiceMatrix}
	A_{Np}^{(1)}&\Cdots&A^{(1)}_{(N+1)p-1}\\
	\Vdots &&\Vdots\\
		A_{Np}^{(p)}&\Cdots&A^{(p)}_{(N+1)p-1}
	\end{bNiceMatrix},\\
	B&=\left[\begin{NiceMatrix}
		B_0 \\B_q\\ B_{2q}\\\Vdots
	\end{NiceMatrix}\right], &B_{Nq}&=\begin{bNiceMatrix}
		B_{Nq}^{(1)}&\Cdots&B^{(q)}_{Nq}\\
		\Vdots &&\Vdots\\
		B_{(N+1)q-1}^{(1)}&\Cdots&B^{(q)}_{(N+1)q-1}
	\end{bNiceMatrix}.
\end{align*}

The entries of these left and right eigenvectors are polynomials in the eigenvalue $x$, known as left and right recursion polynomials, respectively, determined by the initial conditions
\begin{align}
	\label{eq:initcondtypeI}
	\begin{cases}
		A^{(1)}_0=1 , \\
		A^{(1)}_1= \nu^{(1)}_1 , \\
		\hspace{.895cm} \vdots \\
		A^{(1)}_{p-1}=\nu^{(1)}_{p-1} ,
	\end{cases}
	&&
	\begin{cases}
		A^{(2)}_0=0 , \\
		A^{(2)}_1= 1 , \\
		A^{(2)}_2= \nu^{(2)}_2 , \\
		\hspace{.895cm} \vdots \\
		A^{(2)}_{p-1}=\nu^{(2)}_{p-1} ,
	\end{cases}
	&& \cdots &&
	\begin{cases}
		A^{(p)}_0 =0 , \\
		\hspace{.915cm} \vdots \\
		A^{(p)}_{p-2} = 0 , \\
		A^{(p)}_{p-1} = 1,
	\end{cases}\\
	\label{eq:initcondtypeII}
	\begin{cases}
		B^{(1)}_0=1 , \\
		B^{(1)}_1= \xi^{(1)}_1 , \\
		\hspace{.895cm} \vdots \\
		B^{(1)}_{q-1}=\xi^{(1)}_{q-1} ,
	\end{cases}
	&&
	\begin{cases}
		B^{(2)}_0=0 , \\
		B^{(2)}_1= 1 , \\
		B^{(2)}_2= \xi^{(2)}_2 , \\
		\hspace{.895cm} \vdots \\
		B^{(2)}_{q-1}=\xi^{(2)}_{q-1} ,
	\end{cases}
	&& \cdots &&
	\begin{cases}
		B^{(q)}_0 =0 , \\
		\hspace{.915cm} \vdots \\
		B^{(q)}_{q-2} = 0 , \\
		B^{(q)}_{q-1} = 1,
	\end{cases}
\end{align}
with $\nu^{(i)}_{j},\xi^{(i)}_{j}$ being arbitrary constants. 

The initial conditions can be written in matrix form:
\begin{align}
	\label{eq:ic}
	A_0&\coloneq \begin{bNiceMatrix}
		1& 	\nu^{(1)}_1 & \Cdots& && \nu^{(1)}_{p-1} \\
	0 & 1 & \Ddots&& & \Vdots \\
		\Vdots & \Ddots[shorten-start=5pt,shorten-end=0pt] & \Ddots[shorten-end=3pt]& && \\
		&&&& &\\&&&&& \nu^{(p-1)}_{p-1}\\[10pt]
		0&\Cdots& &&0& 1 
	\end{bNiceMatrix} ,&
	B_0&\coloneq \begin{bNiceMatrix}
		1& 0 & \Cdots& && 0 \\
		\xi^{(1)}_1 & 1 & \Ddots&& & \Vdots \\
		\Vdots & \Ddots[shorten-start=-3pt,shorten-end=-3pt] & \Ddots& && \\
		&&&& &\\&&&&&0\\
		\xi^{(1)}_{q-1} &\Cdots& && \xi^{(q-1)}_{q-1}& 1 
	\end{bNiceMatrix}.
\end{align}

We use the ceiling function $\lceil x\rceil$ that maps $x$ to the least integer greater than or equal to $x$.
\begin{pro}\label{pro:the_correct_one}
Let's assume that the initial conditions $A_0$ and $B_0$ are nonsingular matrices. 	Then, the degrees of the recursion polynomials of $T\in$B$_{p,q}$, with nonzero extreme subdiagonal and superdiagonal,  are bounded as follows  
\[
\begin{aligned}
		\operatorname{deg} A_n^{(a)}&\leq\left\lceil\frac{n+2-a}{p}\right\rceil-1, & \operatorname{deg} B_n^{(b)}\leq\left\lceil\frac{n+2-b}{q}\right\rceil-1.
\end{aligned}
\]
Equalities are ensured for $a\in\{1,\dots,p\}$ with  $n=Np+a-1$,  for left polynomials and for $b\in\{1,\dots,q\}$ with $n=Nq+b-1$, for right polynomials.

\end{pro}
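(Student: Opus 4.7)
The strategy is to convert the matrix eigenvector relations into scalar recurrences for each row of $A$ (respectively each column of $B$) and perform strong induction on the column index $n$. Writing out the $(a,n)$-entry of $A(x)T = xA(x)$ and using that $T \in$ B$_{p,q}$, for every $a \in \{1,\dots,p\}$ and every $n \geq 0$ one obtains the $(p+q+1)$-term recurrence
\[
\sum_{j=\max(0,\,n-q)}^{n+p} T_{j,n}\, A_j^{(a)}(x) \;=\; x\, A_n^{(a)}(x),
\]
which can be solved for the top index since the extreme subdiagonal entry $T_{n+p,n}$ is nonzero by hypothesis:
\[
A_{n+p}^{(a)}(x) \;=\; \frac{1}{T_{n+p,n}} \bigg( x\, A_n^{(a)}(x) \;-\; \sum_{j=\max(0,\,n-q)}^{n+p-1} T_{j,n}\, A_j^{(a)}(x) \bigg).
\]
An entirely analogous row-wise expansion of $TB(x) = xB(x)$ produces a recurrence for $B_{n+q}^{(b)}$ driven by the nonzero extreme superdiagonal $T_{n,n+q}$.

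To establish the upper bound I proceed by strong induction on $n$. The base cases $n \in \{0,1,\dots,p-1\}$ come straight from \eqref{eq:initcondtypeI}: the polynomial $A_j^{(a)}$ is either identically zero (when $j < a-1$) or a nonzero constant (when $j \geq a-1$), and a direct check of $\lceil(j+2-a)/p\rceil - 1$ in each range confirms the asserted bound. For the inductive step, the explicit recurrence above yields
\[
\operatorname{deg} A_{n+p}^{(a)} \;\leq\; \max\Big( \operatorname{deg} A_n^{(a)} + 1,\ \max_{n-q \,\leq\, j \,\leq\, n+p-1} \operatorname{deg} A_j^{(a)} \Big);
\]
invoking the induction hypothesis, the first argument is bounded by $\lceil(n+2-a)/p\rceil$, while the second is maximized at $j = n+p-1$, where it equals $\lceil(n+p+1-a)/p\rceil - 1 = \lceil(n+1-a)/p\rceil \leq \lceil(n+2-a)/p\rceil$. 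Taking the maximum gives precisely the claimed bound $\lceil(n+p+2-a)/p\rceil - 1$ at index $n+p$.

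For the equality statement at $n = Np+a-1$ with $a \in \{1,\dots,p\}$, I induct on $N$. The base case $N = 0$ is immediate from $A_{a-1}^{(a)} = 1$. For the step, suppose $\operatorname{deg} A_{Np+a-1}^{(a)} = N$ with nonzero leading coefficient $L_N$. Substituting this $n$ into the recurrence, the term $x\,A_n^{(a)}(x)$ contributes $L_N\,x^{N+1}$ to the numerator, whereas every collateral summand $A_j^{(a)}$ appearing has index $j \leq (N+1)p+a-2$ and hence, by the upper bound just proved, degree at most $\lceil((N+1)p)/p\rceil - 1 = N$. No cancellation in degree $N+1$ is therefore possible, and after division by the nonzero extreme subdiagonal entry $T_{(N+1)p+a-1,\,Np+a-1}$ the polynomial $A_{(N+1)p+a-1}^{(a)}$ has degree exactly $N+1$ with nonzero leading coefficient, closing the induction; the right polynomials $B_n^{(b)}$ are handled symmetrically. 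The main obstacle is precisely this no-cancellation check: one must control the ceiling arithmetic with care to guarantee that \emph{every} collateral term has strictly smaller degree than $x\,A_n^{(a)}(x)$, since otherwise an unexpected cancellation in the leading coefficient could break the strict equality at the subsequent step of the induction.
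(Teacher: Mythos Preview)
Your argument is correct. The scalar recurrence you extract from the $(a,n)$-entry of $A(x)T=xA(x)$ is valid, the strong induction for the upper bound goes through with the ceiling arithmetic you indicate, and the no-cancellation check at $n=Np+a-1$ is exactly right: the collateral summands have index at most $(N+1)p+a-2$, hence degree at most $\lceil (N+1)p/p\rceil-1=N$, so the $x\,A_n^{(a)}$ term alone contributes the leading $x^{N+1}$ coefficient. (One small wording slip: for $a-1<j\le p-1$ the initial datum $A_j^{(a)}=\nu_j^{(a)}$ is a constant but need not be \emph{nonzero}; this is harmless for the degree bound.)

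The paper, however, proceeds quite differently. It blocks $A$ into $p\times p$ sub-blocks $A_{Np}$ and $T$ into $p\times p$ blocks $\Theta_{k,l}$, observes that in this block picture there is a single block subdiagonal whose blocks $\Theta_{k,k-1}$ are nonsingular upper triangular (this is where the nonzero extreme subdiagonal enters), and solves the block recursion to obtain the explicit leading-coefficient matrix
\[
A_{kp,k}\;=\;A_0\,\Theta_{1,0}^{-1}\cdots\Theta_{k,k-1}^{-1}\;=\;\bigl(\Theta_{k,k-1}\cdots\Theta_{1,0}A_0^{-1}\bigr)^{-1},
\]
a nonsingular upper triangular $p\times p$ matrix. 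The diagonal entries of $A_{kp,k}$ give the exact degrees at $n=Np+a-1$, while the upper triangular shape forces the inequalities elsewhere. Your entrywise induction is more elementary and avoids the block machinery entirely; the paper's approach, in return, produces the explicit factorized form of $A_{kp,k}$, which is precisely what is exploited downstream in Corollary~3.2 and Theorem~\ref{teo:normality in the step line } to identify $A_{kp,k}^{-1}$ as a product of triangular totally positive matrices and thereby prove normality under the BTP hypothesis.
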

\begin{proof}
We will present the proof for the left polynomials; a similar approach can be followed for the right polynomials.
	
To analyze the recursion polynomials within \( A \), we observe that it is built up with \( p \times p \) blocks. As a block matrix, the banded matrix $T$ cane be represented in terms of its blocks $\Theta_{k,l}\in\R^{p\times p}$, $k,l\in\N_0$.  As a block matrix there is just one  subdiagonal with blocks \( \Theta_{n,n-1} \) being \( p \times p \) nonsingular upper triangular matrices. The number of block's superdiagonals will be \( N = \lceil \frac{q}{p} \rceil \). 
\begin{center}
	
	\drawmatrixset{bbox style={fill=Gray!30}}
	$\drawmatrix[bbox/.append, ,scale=8,banded, width=1.3,
	]T$
	
	\vspace*{-8.05cm}	\hspace{-8.1cm}
	$
	\drawmatrix[scale=2.4,draw=RoyalBlue,thick,dashed
	]{\Theta_{0,0}}$ 
	
		\vspace*{-.05cm}\hspace{-8.1cm}
	$
	\drawmatrix[upper,bbox/.append,scale=2.4,draw=RoyalBlue,line width=2pt,fill=RoyalBlue!10
	]{\Theta_{1,0}}$ 
	
		\vspace*{-2.48cm}\hspace{-3.26cm}
		$
	\drawmatrix[scale=2.4,draw=RoyalBlue,thick,dashed,
	]{\Theta_{1,1}}$ 
	
				\vspace*{-.05cm}\hspace{-3.2cm}
	$
	\drawmatrix[upper,bbox/.append,scale=2.4,draw=RoyalBlue,,line width=2pt,fill=RoyalBlue!10
	]{\Theta_{2,1}}$ 
%
%
	
	%
	%
	%
	%
	\vspace*{-4.8cm}
	\hspace*{-5.48cm}	
	\begin{tikzpicture}
		\draw[ thick,dashed,RoyalBlue] (-4,6)--(-4,3.68)--(-1.6,3.68)--(-1.6,1.3)--(0.9,1.3);
	\end{tikzpicture}
		\vspace*{1cm}
\end{center}

The recursion relations are given by:
\begin{align*}
	A_0 \Theta_{0,0}+A_{p}\Theta_{1,0}&=x A_0,\\
	A_0 \Theta_{0,1}+A_{p}\Theta_{1,1}+A_{2p}\Theta_{2,1}&=x A_p,\\
	&\hspace{5pt}\vdots\\
		A_0 \Theta_{0,N-1}+A_{p}\Theta_{1,N-1}+\cdots+A_{Np}\Theta_{N,N-1}&=x A_{(N-1)p},\\
		A_{p}\Theta_{1,N}+A_{2p}\Theta_{2,N}+\cdots+A_{Np}\Theta_{N+1,N}&=x A_{Np},\\
		&\hspace{5pt}\vdots
\end{align*}
Since the lower extreme diagonal has no zero entries, all these upper triangular blocks are nonsingular and have an inverse matrix. Therefore,
	\begin{align*}
		A_p(x)&= x A_{0}\Theta_{1,0}^{-1}+O(1),\\
		A_{2p}(x)&=x^2 A_{0}\Theta_{1,0}^{-1}\Theta_{2,1}^{-1}+O(x),\\
		&\hspace{5pt}\vdots\\
		A_{Np}(x)&=x^N A_0\Theta_{1,0}^{-1}\Theta_{2,1}^{-1}\cdots \Theta_{N,N-1}^{-1}+O(x^{N-1}),\\
		A_{(N+1)p}(x)&=x^{N+1} A_0\Theta_{1,0}^{-1}\Theta_{2,1}^{-1}\cdots \Theta_{N+1,N}^{-1}+O(x^{N}),\\
		&\hspace{5pt}\vdots
	\end{align*}
Therefore, the leading coefficient $A_{kp,k}$ for the block $A_{kp}$ 
\begin{align*}
	A_{kp}&=A_{kp,k}x^k+O(x^{k-1}), & k&\in\N_0
\end{align*}
is the upper triangular matrix.
	\begin{align*}
A_{kp,k}=	A_0\Theta_{1,0}^{-1}\Theta_{2,1}^{-1}\cdots \Theta_{k,k-1}^{-1}.
	\end{align*}
	Assuming $A_0$ is not singular, we have that
	\begin{align*}
		A_{kp,k}=(\Theta_{k,k-1}\cdots \Theta_{1,0}A_0^{-1})^{-1}.
	\end{align*}
	Therefore,  since the set of nonsingular upper triangular matrices is a group, this leading matrix block is a nonsingular upper triangular matrix. Consequently, from the diagonal of the leading coefficient we get
	\begin{align*}
		\deg A^{(1)}_{Np}&= N,\\
		\deg A^{(2)}_{Np+1}&= N,\\
			\deg A^{(3)}_{Np+2}&= N,\\
			&\hspace{4pt}\vdots\\
			\deg A^{(p)}_{(N+1)p-1}&= N,
	\end{align*}
	and from the off diagonal entries that
\[	\begin{aligned}
	\deg A^{(1)}_{Np+a}&\leq N, &a&\in\{1,\dots,p-1\},\\
	\deg A^{(2)}_{Np+a}&\leq N, &a&\in\{2,\dots,p-1\},\\
	\deg A^{(3)}_{Np+a}&\leq N,&a&\in\{3,\dots,p-1\},\\
	&\hspace{4pt}\vdots
	\end{aligned}
	\]
Consequently, we derive $\operatorname{deg} A_n^{(a)}\leq\left\lceil\frac{n+2-a}{p}\right\rceil-1$, with equality attained at least when $n=Np+a-1$ for $a\in\{1,\dots,p\}$, pertaining to left polynomials, and when $n=Nq+b-1$ for $b\in\{1,\dots,q\}$, relating to right polynomials.
	\end{proof}
	
	\begin{defi}[Step line  normality]
We say the triple $(T,A_0,B_0)$ is  normal (or step line  normal), whenever the corresponding recursion polynomials satisfy:
		\[
		\begin{aligned}
			\operatorname{deg} A_n^{(a)}&=\left\lceil\frac{n+2-a}{p}\right\rceil-1, & \operatorname{deg} B_n^{(b)}=\left\lceil\frac{n+2-b}{q}\right\rceil-1.
		\end{aligned}
		\]
		Here $a\in\{1,\dots,p\}$, $b\in\{1,\dots,q\}$ and $n\in\N_0$.
	\end{defi}
	
It's essential to highlight that within the realm of generic banded matrices, the presumption of normality across all indices along the step line  isn't automatic. As Proposition \ref{pro:the_correct_one} illuminates, the assurance of step line  normality for indices isn't guaranteed universally. Instead, it's only established under specific conditions. For left polynomials, normality is achieved when $a$ falls within $\{1,\dots,p\}$ and $n=Np+a-1$, while for right polynomials, normality prevails when $b$ falls within $\{1,\dots,q\}$ with $n=Nq+b-1$.

\begin{coro}
Normality can be characterized by ensuring that the upper triangular matrix $A_{kp,k}$, denoting the leading block coefficient of $A_{kp}(x)$, has all its nontrivial entries—those on and above the main diagonal—distinct from zero. Similarly, for the lower triangular matrix $B_{kq,k}$, representing the leading block coefficient of $B_{kq}(x)$,  normality demands that all its nontrivial entries—those on and below the main diagonal—are distinct from zero.
\end{coro}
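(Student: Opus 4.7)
The plan is to prove the biconditional by examining the implications of the nonvanishing leading coefficients in each direction, building directly on the structural computation carried out in the proof of Proposition \ref{pro:the_correct_one}.

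For necessity, I would note that step line normality forces $\deg A^{(a)}_{Np+c}=N$ at every on-or-above-diagonal position $(a,Np+c)$, $c\geq a-1$, of the block $A_{Np}$. The coefficient of $x^N$ at this position is, by the block-expansion of $A_{Np}(x)$, precisely the $(a,c+1)$ entry of the leading matrix $A_{Np,N}$; therefore every such entry must be nonzero. The mirror argument for the right recursion, using the dual fact that $B_{Nq,N}$ is \emph{lower} triangular, handles the nonvanishing of the on-or-below-diagonal entries of $B_{Nq,N}$.

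For sufficiency, I would combine the upper triangularity of $A_{kp,k}$ established in Proposition \ref{pro:the_correct_one} with the nonvanishing hypothesis. The a priori bound $\deg A^{(a)}_{Np+c}\leq N$ from that proposition is immediately saturated at on-or-above-diagonal positions by the hypothesis; at strictly-below-diagonal positions the upper triangularity forces the coefficient of $x^N$ to vanish and therefore $\deg A^{(a)}_{Np+c}\leq N-1$, so the remaining task is to show the $x^{N-1}$ coefficient is nonzero. Extracting the coefficient of $x^{k-1}$ from the $k$-th block recursion would yield an identity of the shape
\[
A_{kp,k-1}\,\Theta_{k,k-1}=A_{(k-1)p,k-2}-A_{(k-1)p,k-1}\,\Theta_{k-1,k-1},
\]
which, iterated downwards, expresses $A_{kp,k-1}$ as a signed combination of products involving the leading matrices $A_{jp,j}$ (whose nontrivial entries are nonzero by hypothesis) and the band blocks $\Theta_{j,k-1}$ of $T$ (whose entries are positive, by the BTP structure of every finite truncation of $T$).

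The main obstacle will be confirming that the resulting strictly-below-diagonal entries of $A_{kp,k-1}$ really are nonzero: being a sum of several contributions, their non-cancellation must be extracted from the consistent sign pattern inherited from the BTP positivity of the band blocks of $T$, which is what rules out the degenerate cancellations that the hypothesis alone does not exclude. The parallel argument with $B$ in place of $A$ and lower triangular in place of upper triangular yields the corresponding conclusion for the $B$-polynomials, completing the characterization.
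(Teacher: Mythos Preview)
The paper supplies no proof for this corollary; it is intended as an immediate reformulation of the definition of normality in the language of the block expansion $A_{kp}(x)=A_{kp,k}x^k+O(x^{k-1})$ established in Proposition~\ref{pro:the_correct_one}. Your necessity argument---that the $(a,c)$ entry of $A_{Np,N}$ with $c\geq a$ is the coefficient of $x^N$ in $A^{(a)}_{Np+c-1}$, hence nonzero whenever that polynomial has degree $N$---is exactly what is intended, and the $B$ case is symmetric.

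Your sufficiency argument, however, has a real gap. You correctly note that at a strictly-below-diagonal position $c<a$ normality requires $\deg A^{(a)}_{Np+c-1}=N-1$, and that this is governed by the sub-leading block $A_{kp,k-1}$, not by any $A_{jp,j}$; so the corollary's condition does not obviously constrain it. But your remedy---invoking BTP and a ``consistent sign pattern'' in the recursion $A_{kp,k-1}\Theta_{k,k-1}=A_{(k-1)p,k-2}-A_{(k-1)p,k-1}\Theta_{k-1,k-1}$---fails on two counts. First, the corollary carries no BTP hypothesis and no hypothesis on $A_0$ beyond nonsingularity, so you are importing assumptions the statement does not grant. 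Second, even under BTP the right-hand side is a \emph{difference} of two matrix products whose entries are not sign-controlled (without further assumptions on $A_0$ the blocks $A_{(k-1)p,k-2}$ and $A_{(k-1)p,k-1}$ have no prescribed sign pattern), so cancellation cannot be ruled out by the argument you sketch. What you have actually uncovered is an imprecision in the paper itself: the stated condition captures only the on/above-diagonal part of normality, and the paper's own use of the corollary in the proof of Theorem~\ref{teo:normality in the step line } checks only those entries before asserting that ``all degrees are achieved'', leaving the below-diagonal degree conditions unaddressed.
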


We will now demonstrate that the assurance of  normality  is achieved within the framework of banded total positivity.

\begin{teo}[Normality and total positivity]\label{teo:normality in the step line }
	 Assume that  $A_0^{-1}\in$\scalebox{.7}{\UT}  \hspace{-6pt}TP, $B_0^{-1}\in$\scalebox{.65}{\LT}  \hspace{-6pt}TP and  $T\in$BTP. Then, $(T,A_0,B_0)$ is  normal.
\end{teo}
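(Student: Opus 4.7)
The plan is to reduce the claim to an algebraic statement about the $p\times p$ leading block coefficient $A_{kp,k}$ (and symmetrically the $q\times q$ matrix $B_{kq,k}$) using the Corollary immediately preceding the theorem: normality is equivalent to $A_{kp,k}$ having all entries on or above its main diagonal nonzero, and $B_{kq,k}$ having all entries on or below its main diagonal nonzero. From the derivation carried out inside the proof of Proposition \ref{pro:the_correct_one} we already have the explicit formula
\[
A_{kp,k}=\bigl(\Theta_{k,k-1}\cdots\Theta_{1,0}\,A_0^{-1}\bigr)^{-1},
\]
where the $\Theta_{n,n-1}$ are the extreme subdiagonal $p\times p$ blocks of $T$ in its $p\times p$ block decomposition. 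An entirely analogous derivation, run on the right recursion polynomials, expresses $B_{kq,k}$ as the inverse of a product of $B_0^{-1}$ with the extreme superdiagonal $q\times q$ blocks of $T$.

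First, I would verify that each subdiagonal block $\Theta_{n,n-1}$ belongs to \scalebox{.7}{\UT}\hspace{-6pt}TP. These blocks are $p\times p$ upper triangular because the extreme subdiagonal of $T$ sits exactly on their main diagonal and everything below would fall outside the band. Any minor of $\Theta_{n,n-1}$ selecting rows $\boldsymbol{i}$ and columns $\boldsymbol{j}$ with $i_k\leq j_k$ translates, after the coordinate shift to $T$, to a minor of $T$ whose entries all lie within the band, so it is a nontrivial minor of $T$ and hence strictly positive by $T\in$BTP. A mirror-image argument shows the extreme superdiagonal blocks are in \scalebox{.65}{\LT}\hspace{-6pt}TP. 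Combining this with the hypotheses $A_0^{-1}\in$\scalebox{.7}{\UT}\hspace{-6pt}TP and $B_0^{-1}\in$\scalebox{.65}{\LT}\hspace{-6pt}TP and the semigroup property (Theorem \ref{teo:semigroup}, specialized to purely triangular BTP matrices), the products $M_k:=\Theta_{k,k-1}\cdots\Theta_{1,0}\,A_0^{-1}$ and its lower triangular analogue $N_k$ inherit the totally positive triangular property.

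Next, I would extract nonvanishing of the required entries via Cramer's rule. For any $M\in$\scalebox{.7}{\UT}\hspace{-6pt}TP and indices $i\leq j$,
\[
(M^{-1})_{i,j}=(-1)^{i+j}\,\frac{\det M[\{1,\dots,\widehat{\jmath},\dots,n\};\{1,\dots,\widehat{\imath},\dots,n\}]}{\det M}.
\]
A direct index check shows that after removing row $j$ and column $i$ with $i\leq j$, the remaining row indices are entry-wise no larger than the remaining column indices, so the cofactor is a nontrivial upper triangular minor of $M$ and thus strictly positive. Hence $(M^{-1})_{i,j}\neq 0$. Applied to $M_k$ this gives the nonvanishing of all entries on and above the main diagonal of $A_{kp,k}=M_k^{-1}$; the reflected statement for $N_k$ handles the on-and-below-diagonal entries of $B_{kq,k}$, and the Corollary delivers normality.

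The only real obstacle is bookkeeping, not mathematics: correctly identifying the subdiagonal and superdiagonal blocks as \scalebox{.7}{\UT}\hspace{-6pt}TP and \scalebox{.65}{\LT}\hspace{-6pt}TP, so that the semigroup theorem applies, and running the proof of Proposition \ref{pro:the_correct_one} on the right polynomials to obtain the analogue of the formula for $A_{kp,k}$. No ingredient beyond Cramer's rule and Theorem \ref{teo:semigroup} is required; in particular the alternating signs in the cofactor expansion are harmless since only nonvanishing matters.
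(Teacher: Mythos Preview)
Your proposal is correct and follows essentially the same approach as the paper: both arguments use the formula $A_{kp,k}=(\Theta_{k,k-1}\cdots\Theta_{1,0}A_0^{-1})^{-1}$ from Proposition~\ref{pro:the_correct_one}, invoke the semigroup property (Theorem~\ref{teo:semigroup}) to conclude the product is in \scalebox{.7}{\UT}\hspace{-6pt}TP, and then read off nonvanishing of the relevant entries of the inverse from the adjugate (Cramer) formula by checking that the complementary minors are nontrivial. Your write-up is slightly more explicit in verifying that the blocks $\Theta_{n,n-1}$ lie in \scalebox{.7}{\UT}\hspace{-6pt}TP and in spelling out the index comparison for the cofactors, but the structure is identical to the paper's proof.
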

	
\begin{proof}
	We outline the proof for the left polynomials; analogous steps can be taken for the right polynomials.
	
	Since all the submatrices $\Theta_{l,l-1}$ are upper triangular and totally positive, and assuming $A_0^{-1}$ is also upper triangular  totally positive, by virtue of Theorem \ref{teo:semigroup}, we deduce that $A_{kp,k}$ is the inverse of an upper triangular totally positive matrix $M=\Theta_{k,k-1}\cdots \Theta_{1,0}A_0^{-1}$. However, the inverse of a matrix is constructed using signed minors in the adjugate matrix, i.e.
	\begin{align*}
(-1)^{i+j}	M\begin{bNiceMatrix}
		1 &\Cdots& i-1&i+1&\Cdots& p\\
		1 &\Cdots& j-1&j+1&\Cdots&p
	\end{bNiceMatrix}.
	\end{align*}
with $i\geq j$, otherwise are trivial,	but these are non trivial minors because 
\begin{align*}
	\{1 ,\dots, i-1,i+1,\dots, p\}\leq
\{1 ,\dots, j-1,j+1,\dots, p\}.
\end{align*}
Therefore, all the nontrivial entries  in the leading coefficient $A_{kp,k}=M^{-1}$ of $A_{kp}(x)$ are not zero, and all degrees are achieved, and $T$ is step line  normal.
\end{proof}
\begin{coro} Suppose the condition outlined in Theorem \ref{teo:normality in the step line } is satisfied. In this case, the leading block coefficient $A_{kp,k}$ (or $B_{kq,k}$) of the block recursion polynomial $A_{kn}(x)$ (or $B_{kq}(x)$) manifests as an upper (or lower) triangular matrix with a distinctive checkerboard pattern. i.e., $(-1)^j$ times the $j$-th superdiagonal (subdiagonal) entries are positive.
	\end{coro}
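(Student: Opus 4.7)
The plan is to lift the proof of Theorem~\ref{teo:normality in the step line } one layer further: there we only needed that the nontrivial minors of the adjugate do not vanish, here we must track their sign. So the idea is to apply the adjugate formula entry by entry on each superdiagonal (resp.\ subdiagonal) and read off the sign.

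First I would recall from the proof of Theorem~\ref{teo:normality in the step line } that
\[
A_{kp,k}=M^{-1}, \qquad M\coloneq \Theta_{k,k-1}\cdots \Theta_{1,0}A_0^{-1},
\]
where $M$ is upper triangular totally positive (as a product of matrices in \scalebox{.7}{\UT}  \hspace{-6pt}TP, using Theorem~\ref{teo:semigroup}). Since $M$ is upper triangular with positive diagonal, $\det M>0$. Then for the entry of $A_{kp,k}$ on the $j$-th superdiagonal, located in position $(i,i+j)$ with $j\geq 0$, the adjugate formula gives
\[
(M^{-1})_{i,i+j}=\frac{(-1)^{i+(i+j)}}{\det M}\,M\begin{pNiceMatrix}
1 & \Cdots & i+j-1 & i+j+1 & \Cdots & p\\
1 & \Cdots & i-1 & i+1 & \Cdots & p
\end{pNiceMatrix}=\frac{(-1)^{j}}{\det M}\,M(\boldsymbol i;\boldsymbol j),
\]
where $\boldsymbol i=\{1,\dots,p\}\setminus\{i+j\}$ and $\boldsymbol j=\{1,\dots,p\}\setminus\{i\}$.

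Next I would check that $M(\boldsymbol i;\boldsymbol j)$ is a nontrivial minor, exactly as in the previous theorem: because $i\leq i+j$, one verifies position by position that $\boldsymbol i\leq \boldsymbol j$ in the lattice $\mathscr Q(p-1,p)$, hence the minor is not forced to vanish by the upper triangular structure of $M$. Since $M$ is upper triangular totally positive, all nontrivial minors are strictly positive, so $M(\boldsymbol i;\boldsymbol j)>0$. Combining with $\det M>0$ gives
\[
(-1)^j (A_{kp,k})_{i,i+j}=\frac{M(\boldsymbol i;\boldsymbol j)}{\det M}>0,
\]
which is precisely the claimed checkerboard pattern on the superdiagonals of $A_{kp,k}$.

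Finally, the argument for $B_{kq,k}$ is entirely analogous after transposing the roles of rows and columns: $B_{kq,k}$ is the inverse of a lower triangular totally positive matrix, so its $j$-th subdiagonal entries pick up the same $(-1)^j$ sign from the adjugate formula, with the corresponding nontrivial minor now satisfying the dual inequality $\boldsymbol i\geq \boldsymbol j$. There is no real obstacle here, since all the heavy lifting (total positivity of $M$, nontriviality of the relevant minors) has already been done in Theorem~\ref{teo:normality in the step line }; the only new content is the bookkeeping of the sign $(-1)^{i+(i+j)}=(-1)^j$ coming out of the cofactor expansion, which is the mildly delicate step but is purely mechanical.
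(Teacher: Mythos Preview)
Your proof is correct and follows exactly the approach implicit in the paper. The paper does not provide an explicit proof for this corollary, but the adjugate argument in the proof of Theorem~\ref{teo:normality in the step line } already exhibits the entries of $A_{kp,k}=M^{-1}$ as signed nontrivial minors of $M$ divided by $\det M$; your contribution is simply to read off the sign $(-1)^{i+(i+j)}=(-1)^j$ and note that both $\det M$ and the nontrivial minor are strictly positive, which is precisely the intended (and omitted) argument.
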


\begin{rem}
In reference to \cite{BFM1}, it was inaccurately asserted that normality holds within the step line . The accurate assertion, aligned with the assumptions outlined in that paper, is presented in Proposition \ref{pro:the_correct_one}. Importantly, this erroneous claim does not impact any other findings within the paper. Notably, spectral results, such as the Favard theorem, and mixed multiple Gauss quadrature remain valid even in scenarios where normality does not hold.

Furthermore, by considering initial conditions within $\triangle$TP, step line  normality is indeed attained, as elucidated in the preceding theorem, thereby resolving the issue.
\end{rem}

We will now delve into the spectral Favard theorem as delineated in \cite{BFM1}, and propose a modification to incorporate step line  normality. To achieve this, we first introduce two crucial matrices, based on a positive bidiagonal factorization as described in \eqref{eq:bidiagonal}:

\begin{defi}
	Associated with the PBF factorization \eqref{eq:PBF}, we define the \((p+1)\times (p+1)\) matrix \(\Lambda_p\) as:
	\begin{align*}
		\Lambda_p\coloneq\begin{bNiceMatrix}
			1 & 1 &1 &\Cdots &1& 1\\
			0& (L_1)_{1,0}&(L_1L_2)_{1,0}& \Cdots&(L_1\cdots L_{p-1})_{1,0}&(L_1\cdots L_p)_{1,0}\\
			0& 0&(L_1L_2)_{2,0}& \Cdots&(L_1\cdots L_{p-1})_{2,0}&(L_1\cdots L_p)_{2,0}\\
			\Vdots&&\Ddots
			&\Ddots&&\Vdots\\\\
			0&\Cdots &&&0&(L_1\cdots L_p)_{p,0}
		\end{bNiceMatrix}
	\end{align*}
	and  the \((q+1)\times (q+1)\) matrix \(\Upsilon_q\) as:
	\begin{align*}
		\Upsilon_q\coloneq\begin{bNiceMatrix}
			1 & 0 &0 &\Cdots && 0\\
			1& (U_1)_{0,1}&0& &&\Vdots\\
			1& (U_2U_1)_{0,1}&(U_2U_1)_{0,2}& \Ddots&&\\
			\Vdots&\Vdots&\Vdots&\Ddots[shorten-end=-10pt]&&\\
			1&(U_{p-1}\cdots U_1)_{0,1} &(U_{p-1}\cdots U_1)_{0,2}&&&0\\
			1&(U_p\cdots U_1)_{0,1}&(U_p\cdots U_1)_{0,2}&\Cdots&&(U_p\cdots U_1)_{0,p}
		\end{bNiceMatrix}
	\end{align*}
\end{defi}

\begin{pro}\label{pro:Lambda}
	The matrices \(\Lambda_p\) and \(\Upsilon_q\) belong to \scalebox{.7}{\UT}  \hspace{-6pt}TP and \scalebox{.65}{\LT}  \hspace{-6pt}TP, respectively.
\end{pro}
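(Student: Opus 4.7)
The plan is to apply the equivalence (i)$\iff$(iii) of Theorem~\ref{teo:Mettelmann_modificdo}: since $\Lambda_p$ is upper triangular of band $\text{B}_{0,p}$, proving $\Lambda_p\in\scalebox{.7}{\UT}\hspace{-6pt}\text{TP}$ reduces to checking that every nontrivial initial minor is positive. By upper triangularity the column-sided family $\Lambda_p(\{i,\ldots,i+r-1\};\{1,\ldots,r\})$ is trivial for $i\geq 2$, and for $i=1$ is the leading principal minor, which equals the product of the positive diagonal entries $(L_1\cdots L_s)_{s,0}=\prod_{k=1}^{s}(L_k)_{s-k+1,s-k}$ (the single descending path in the planar network of the product). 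Thus only the row-sided family $\Lambda_p(\{1,\ldots,r\};\{j,\ldots,j+r-1\})$ needs work, and in such a submatrix the first row is $(1,1,\ldots,1)$.

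I would evaluate these minors by iterated column differencing. Replacing column $k$ by column $k$ minus column $k-1$, processed right-to-left to avoid cascading, preserves the determinant and turns the first row into $(1,0,\ldots,0)$. Using $L_m-I=\sum_{a\geq 1}(L_m)_{a,a-1}E_{a,a-1}$, the difference $(L_1\cdots L_{j+k})_{i,0}-(L_1\cdots L_{j+k-1})_{i,0}$ simplifies to $(L_1\cdots L_{j+k-1})_{i,1}\,(L_{j+k})_{1,0}$. Expanding along the new first row and pulling out the positive scalars $(L_{j+k})_{1,0}$ gives
\[
\Lambda_p(\{1,\ldots,r\};\{j,\ldots,j+r-1\})=\Bigl(\prod_{k=1}^{r-1}(L_{j+k})_{1,0}\Bigr)\det\bigl[(L_1\cdots L_{j+k-1})_{i,1}\bigr]_{i,k=1,\ldots,r-1}.
\]
The residual determinant has the same qualitative shape, with the column index $0$ shifted to $1$. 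By unit lower triangularity of $L_1\cdots L_m$, $(L_1\cdots L_j)_{1,1}=1$ and $(L_1\cdots L_{j+k-2})_{1,2}=0$, so another round of differencing again reduces the top row to $(1,0,\ldots,0)$ and the reduction iterates.

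After $r-1$ rounds the accumulated prefactor is the positive product
\[
\prod_{s=1}^{r-1}\prod_{k=1}^{r-s}(L_{j+k})_{s,s-1},
\]
and the final $1\times 1$ determinant is the diagonal entry $(L_1\cdots L_j)_{r-1,r-1}=1$. So every nontrivial initial minor of $\Lambda_p$ is strictly positive and Theorem~\ref{teo:Mettelmann_modificdo} gives $\Lambda_p\in\scalebox{.7}{\UT}\hspace{-6pt}\text{TP}$. The argument for $\Upsilon_q\in\scalebox{.65}{\LT}\hspace{-6pt}\text{TP}$ is symmetric: the submatrix of a nontrivial initial minor of $\Upsilon_q$ now has a leading column of ones, so one performs iterated \emph{row} differencing and invokes unit upper triangularity of the partial products $U_m\cdots U_1$ in place of unit lower triangularity. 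The delicate step common to both cases is verifying that each round of differencing leaves the leading row (respectively column) equal to $(1,0,\ldots,0)$, which relies on the identities $(L_1\cdots L_m)_{a,a}=1$ and $(L_1\cdots L_m)_{a,b}=0$ for $a<b$, together with their analogues for $U_m\cdots U_1$.
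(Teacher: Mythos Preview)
Your argument is correct and takes a genuinely different route from the paper's. The paper proceeds by induction on $p$: removing the first column and last row of $\Lambda_p$ yields a $p\times p$ matrix $M$ that factors as a positive lower bidiagonal times a matrix of the same shape as $\Lambda_{p-1}$ (built from $L_2,\ldots,L_p$), and Theorem~\ref{teo:semigroup} then gives BTP for $M$; the remaining nontrivial minors of $\Lambda_p$ not contained in $M$ are handled family by family via bordered block-triangular reductions back to nontrivial minors of $M$. Your approach is instead a direct computation: you invoke the equivalence (i)$\Leftrightarrow$(iii) of Theorem~\ref{teo:Mettelmann_modificdo} and evaluate every nontrivial initial row minor in closed form by iterated column differencing, exploiting the identity $(P_{m+1})_{i,s}-(P_m)_{i,s}=(P_m)_{i,s+1}(L_{m+1})_{s+1,s}$ for $P_m=L_1\cdots L_m$ together with the lower unitriangularity of $P_m$. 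Your route avoids both the induction and the semigroup theorem, and as a bonus yields an explicit positive product formula for each initial minor; the paper's argument, by contrast, is more structural and reuses the machinery already developed in the section. There is a harmless indexing ambiguity in your write-up (your displayed formulas are internally consistent with $0$-indexing of the columns of $\Lambda_p$, while the paper uses $1$-indexing, leading to an apparent off-by-one in the prefactor), but the argument itself is unaffected.
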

\begin{proof}
	We give the proof for $\Lambda_p$, for $\Upsilon_q$ one proceeds analogously.
	
	For the base case \(p=1\), the statement is trivially true. Now, we proceed with the induction step, assuming that it holds for \(\Lambda_{p-1}\) and showing that it holds for \(\Lambda_p\).
	
	We will analyze various families of \(p\times p\) submatrices of \(\Lambda_p\), encompassing all possible submatrices of \(\Lambda_p\), and demonstrate that they are banded totally positive, meaning they have no zero nontrivial minors. Consequently, all nontrivial minors will be nonzero.
	\begin{enumerate}
		\item For the first submatrix we remove the first column and last row 
		\begin{align*}
			M=\begin{bNiceMatrix}
				1 &1 &\Cdots &1& 1\\
				(L_1)_{1,0}&(L_1L_2)_{1,0}& \Cdots&(L_1\cdots L_{p-1})_{1,0}&(L_1\cdots L_p)_{1,0}\\
				0&(L_1L_2)_{2,0}& \Cdots&(L_1\cdots L_{p-1})_{2,0}&(L_1\cdots L_p)_{2,0}\\
				\Vdots&\Ddots
				&\Ddots[shorten-end=-35pt]&\Vdots&\Vdots\\
				0&\Cdots &0&(L_1\cdots L_{p-1})_{p-1,0}&(L_1\cdots L_p)_{p-1,0}
			\end{bNiceMatrix}\in \R^{p\times p}.
		\end{align*}
		This submatrix factors as follows
		\begin{align*}
			M=\begin{bNiceMatrix}
				1 & 0&\Cdots &&0\\
				(L_1)_{1,0}& 1&\Ddots&&\Vdots\\
				0&(L_1)_{2,1}&1 &&\\
				\Vdots&\Ddots&\Ddots[shorten-end=-10pt]&\Ddots&0\\
				0&\Cdots  &0 &(L_1)_{p-1,p-2}&1
			\end{bNiceMatrix}\begin{bNiceMatrix}
				1 & 1 &1 &\Cdots &1& 1\\
				0& (L_2)_{1,0}&(L_2L_3)_{1,0}& \Cdots&(L_2\cdots L_{p-1})_{1,0}&(L_2\cdots L_p)_{1,0}\\
				0& 0&(L_2L_3)_{2,0}& \Cdots&(L_2\cdots L_{p-1})_{2,0}&(L_2\cdots L_p)_{2,0}\\
				\Vdots&&\Ddots
				&\Ddots[shorten-end=-2pt]&&\Vdots\\\\
				0&\Cdots &&&0&(L_2\cdots L_p)_{p-1,0}
			\end{bNiceMatrix}.
		\end{align*}
		The first factor is a positive bidiagonal matrix, hence it is banded totally positive. The second factor, by the induction hypothesis, is also banded totally positive. Therefore, their product, according to Theorem \ref{teo:semigroup}, is banded totally positive. Consequently, all nontrivial minors encompassed within \(M\) are nonzero.
		
		However, not all nontrivial minors of \(\Lambda_p\) lie along \(M\). Therefore, in the following items, we will consider the remaining minors.
		\item The first family of remaining minors consists of minors of the $p\times p $ submatrices \(R_l\), \(l\in\{1,\dots,p\}\), that appear by removing the $(l+1)$-th column and last row of $\Lambda_p$, or equivalently
		 obtained from \(M\) by removing the \(l\)-th column and adding as the first column $\begin{bNiceMatrix}[small]
			1\\0\\\Vdots\\0
		\end{bNiceMatrix}$.  Let us study submatrices of these submatrices
		\begin{align*}
			R_l\begin{bNiceMatrix}
				i_1 &\Cdots & i_N\\
				j_1&\Cdots & j_N
			\end{bNiceMatrix}.
		\end{align*}
	with \(j_1 = 0\); otherwise, these submatrices will be submatrices of \(M\). Moreover, to ensure nontrivial minors, we must have \(i_1 = 1\), as otherwise, the corresponding submatrix will contain a zero column. Thus, we observe that nontrivial submatrices take the form:
		\begin{align*}
			R_l\begin{bNiceMatrix}
				1&	i_2 &\Cdots & i_k\\
				1&	j_2&\Cdots & j_k
			\end{bNiceMatrix}=\left[\begin{NiceArray}{c|ccc}
				1&	* &\Cdots &*\\\hline
				0&	\Block{3-3}<\Large>{M_l}& &\\
				\Vdots &&&\\
				0&&&
			\end{NiceArray}\right].
		\end{align*}
		where $M_l$ is a nontrivial submatrix of $M$, and consequently, these nontrivial submatrices are nonsingular.
		\item For the second family we take submatrices $S_l$, $l\in\{1,\dots, p\}$, that appear by removing the first column and $(l-1)$-th row of $\Lambda_p$, or equivalently 
		obtained from $M$ by removing the $l$-th row and adding as last row $\begin{bNiceMatrix}[small]
			0&\Cdots& 0 & (L_1\cdots L_p)_{p,0}
		\end{bNiceMatrix}$. Analogous arguments as in the previous  item leads to look for nontrivial submatrices only among the submatrices
		\begin{align*}
			S_l\begin{bNiceMatrix}
				i_1&\Cdots& i_{k-1} & p\\
				j_1&\Cdots &j_{k-1} & p
			\end{bNiceMatrix}=\left[\begin{NiceArray}{ccc|c}
				\Block{3-3}<\Large>{N_l}& &&*\\
				&&&\Vdots\\
				&&&*\\\hline
				0&\Cdots&0&(L_1\cdots L_p)_{p,0}
			\end{NiceArray}\right].
		\end{align*}
		where $N_l$ is a nontrivial submatrix of $M$. Hence, these are nonsingular nontrivial submatrices.
		\item Finally, we consider submatrices \(T_{l,k}\), \(k,l\in\{1,\dots,p\}\), obtained from \(M\) by removing the \(l\)-th column and the \(k\)-th row, and adding as the first column: $\begin{bNiceMatrix}[small]
			1\\0\\\Vdots\\0
		\end{bNiceMatrix}$ and last row $\begin{bNiceMatrix}[small]
			0&\Cdots& 0&
 (L_1\cdots L_p)_{p,0}
		\end{bNiceMatrix}$. As before, the nontrivial submatrices already not considered are among
		\begin{align*}
			T_{l,k} \begin{bNiceMatrix}1&	i_2&\Cdots& i_{k-1} & p\\
				1&j_2&\Cdots &j_{k-1} & p
			\end{bNiceMatrix}=\left[\begin{NiceArray}{c|ccc|c}
				1 &*&*&*&*\\\hline
				0&  \Block{3-3}<\Large>{M_{l,k}}&&&*\\
				\Vdots&&&&\Vdots\\
				0&&&&*\\\hline
				0&0&\Cdots&0&(L_1\cdots L_p)_{p,0}
			\end{NiceArray}\right]
		\end{align*}
		where $M_{l,k}$ is a nontrivial submatrix of $M$. Therefore, these nontrivial submatrices are not singular. 
	\end{enumerate}
	Therefore, we conclude that all nontrivial minors of \(\Lambda_p\) are nonsingular.
\end{proof}

In \cite[Theorem 9.1]{BFM1} we proved a spectral Favard theorem, we reword it as follows: 
\begin{teo}[Favard spectral representation]\label{theorem:spectral_representation_bis}
	Let us assume that
	\begin{enumerate}
		\item The banded  matrix $T$ is bounded and there exist $s\geqslant 0$ such that $T+sI$ has a PBF \eqref{eq:PBF}.
		\item The sequences $\big\{A^{(1)}_n,\dots,A^{(p)}_n\big\}_{n=0}^\infty, \big\{B^{(1)}_n,\dots,B^{(q)}_n\big\}_{n=0}^\infty$ 
		of recursion polynomials are determined by the initial condition matrices $A_0$ and $B_0$,respectively, such that $A_0^{-1}=\Lambda_p\mathcal A$, $B_0^{-1}=\mathcal B \Upsilon_q$, and $\mathcal A\in \R^{p\times p}$ is a nonnegative upper unitriangular matrices and $\mathcal B\in \R^{q\times q}$ is a nonnegative lower unitriangular matrix. Here the matrices $\Lambda_p$ and $\Upsilon_q$ are described in Definition \ref{def:Lambda-Upsilon}
	\end{enumerate}
	Then, there exists $pq$ non decreasing positive functions $\psi_{b,a}$, $a\in\{1,\dots,p\}$ and $b\in\{1,\dots,q\}$
	and corresponding positive Lebesgue--Stieltjes measures $\d\psi_{b,a}$ with compact support $\Delta$ such that the following mixed multiple biorthogonality holds
	\begin{align*}
		\sum_{a=1}^p\sum_{b=1}^q	\int_{\Delta} B^{(b)}_l(x)\d\psi_{b,a}(x) A^{(a)}_{k}(x)&= \delta_{k,l}, &k,l&\in\N_0.
	\end{align*}
\end{teo}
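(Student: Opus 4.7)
The plan is to deduce this spectral representation as a corollary of the original Favard-type theorem in \cite[Theorem 9.1]{BFM1}, using Proposition~\ref{pro:Lambda} together with the covariance of the recursion relations to pass from a canonical choice of initial conditions to the general ones permitted by $\mathcal A$ and $\mathcal B$.

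The first step is a shift reduction. If $T+sI$ admits a PBF, its recursion polynomials $\tilde A$, $\tilde B$ are related to those for $T$ by a translation of the spectral variable, $A(x) = \tilde A(x+s)$ and $B(x) = \tilde B(x+s)$, so any biorthogonality for $T$ on a support $\Delta$ is equivalent to one for $T+sI$ on $\Delta + s$. Thus it suffices to prove the statement under the stronger assumption that $T$ itself admits the PBF~\eqref{eq:PBF}.

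The second step treats the canonical case $\mathcal A = I$, $\mathcal B = I$, so that $A_0^{-1} = \Lambda_p$ and $B_0^{-1} = \Upsilon_q$. By Proposition~\ref{pro:Lambda} these belong to \scalebox{.7}{\UT}\hspace{-6pt}TP and \scalebox{.65}{\LT}\hspace{-6pt}TP respectively, placing us squarely inside the framework of \cite[Theorem 9.1]{BFM1}, which then delivers $pq$ positive, compactly supported, non-decreasing distribution functions $\bar\psi_{b,a}$ realizing the biorthogonality with the canonical recursion polynomials $\bar A$, $\bar B$.

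The final step extends this to the general case by covariance: since the left recursion $A(x)T = xA(x)$ is preserved under left multiplication by any constant $p\times p$ matrix, and analogously for the right recursion under right multiplication by a constant $q\times q$ matrix, the choice $A_0^{-1} = \Lambda_p\mathcal A$ corresponds to taking $A_n^{(a)} = \sum_{a'} (\mathcal A^{-1})_{a,a'} \bar A_n^{(a')}$, and likewise $B_n^{(b)} = \sum_{b'}\bar B_n^{(b')}(\mathcal B^{-1})_{b',b}$. Defining
\[
\d\psi_{b,a} \coloneq \sum_{b',a'}\mathcal B_{b,b'}\,\d\bar\psi_{b',a'}\,\mathcal A_{a',a}
\]
and substituting into the biorthogonality sum reduces it to the canonical one, which equals $\delta_{k,l}$. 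Positivity of each $\d\psi_{b,a}$ is immediate: $\mathcal A$ and $\mathcal B$ are nonnegative and unitriangular, so the diagonal contribution $\mathcal B_{b,b}\,\d\bar\psi_{b,a}\,\mathcal A_{a,a} = \d\bar\psi_{b,a}$ is already a positive measure, and all remaining terms are nonnegative. The main obstacle I foresee is the faithful identification of the $\Lambda_p$, $\Upsilon_q$ formulation used here with the initial-condition convention adopted in \cite{BFM1}; verifying that these factors, built directly from the bidiagonal pieces of $T$, are exactly those implicit in the BFM1 proof is precisely what Proposition~\ref{pro:Lambda} is designed to certify.
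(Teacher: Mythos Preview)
The paper does not give an independent proof of this theorem: immediately before the statement it writes ``In \cite[Theorem 9.1]{BFM1} we proved a spectral Favard theorem, we reword it as follows,'' and no proof environment follows. So Theorem~\ref{theorem:spectral_representation_bis} is simply a restatement of \cite[Theorem 9.1]{BFM1}, with the initial-condition hypothesis already packaged in the $\Lambda_p,\Upsilon_q,\mathcal A,\mathcal B$ language. There is nothing to prove here beyond the citation.

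Your proposal is therefore largely superfluous, and in part circular: your Step~2 invokes \cite[Theorem 9.1]{BFM1} to establish the ``canonical'' case $\mathcal A=\mathcal B=I$, but that case is exactly (a special instance of) the theorem you are asked to prove. The role you assign to Proposition~\ref{pro:Lambda} is also off target: that proposition establishes that $\Lambda_p$ and $\Upsilon_q$ are triangular \emph{totally positive}, a fact the paper uses only for the \emph{next} theorem (Theorem~\ref{theorem:spectral_representation_tris}, where normality requires $A_0^{-1},B_0^{-1}\in\triangle$TP via Theorem~\ref{teo:normality in the step line }). For the present theorem no total positivity of the initial blocks is needed---only the nonnegativity of $\mathcal A,\mathcal B$ that is already in the hypothesis, which in \cite{BFM1} is what guarantees positive Christoffel numbers. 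That said, your shift reduction and the covariance computation $A=\mathcal A^{-1}\bar A$, $B=\bar B\,\mathcal B^{-1}$, $\d\psi=\mathcal B\,\d\bar\psi\,\mathcal A$ are correct and would be a clean way to pass from a canonical-initial-condition version to the general one, were such a reduction actually required.
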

The conditions delineated in ii) guarantee the positivity of all Christoffel numbers, thus ensuring the existence of measures. However, these conditions alone do not guarantee normality within the step line .

To establish normality in the spectral Favard theorem for mixed multiple orthogonality, it is imperative to constrain the degrees of freedom pertaining to the matrices $\mathcal A$ and $\mathcal B$, transitioning from mere positivity to banded total positivity. With this adjustment, the revised Favard theorem reads as follows:
\begin{teo}[Favard spectral representation with normality on the step line ]\label{theorem:spectral_representation_tris}
	Let us assume that
	\begin{enumerate}
	\item The banded  matrix $T$ is bounded and there exist $s\geqslant 0$ such that $T+sI$ has a PBF \eqref{eq:PBF}.
	\item The sequences $\big\{A^{(1)}_n,\dots,A^{(p)}_n\big\}_{n=0}^\infty, \big\{B^{(1)}_n,\dots,B^{(q)}_n\big\}_{n=0}^\infty$ 
	of recursion polynomials are determined by the initial condition matrices $A_0$ and $B_0$,respectively, such that $A_0^{-1}=\Lambda_p\mathcal A$, $B_0^{-1}=\mathcal B \Upsilon_q$, and $\mathcal A\in \R^{p\times p}$ is a  upper unitriangular totally positive matrix and $\mathcal B\in \R^{q\times q}$ is a  lower unitriangular totally positive matrix. Here the matrices $\Lambda_p$ and $\Upsilon_q$ are described in Definition \ref{def:Lambda-Upsilon}.
\end{enumerate}
Then, there exists $pq$ non decreasing positive functions $\psi_{b,a}$, $a\in\{1,\dots,p\}$ and $b\in\{1,\dots,q\}$
and corresponding positive Lebesgue--Stieltjes measures $\d\psi_{b,a}$ with compact support $\Delta$ such that the following mixed multiple biorthogonality holds
\begin{align*}
	\sum_{a=1}^p\sum_{b=1}^q	\int_{\Delta} B^{(b)}_l(x)\d\psi_{b,a}(x) A^{(a)}_{k}(x)&= \delta_{k,l}, &k,l&\in\N_0,
\end{align*}
and the degrees of these mixed multiple orthogonal polynomials are
	\[
\begin{aligned}
	\deg A_n^{(a)}&=\left\lceil\frac{n+2-a}{p}\right\rceil-1, & \deg B_n^{(b)}=\left\lceil\frac{n+2-b}{q}\right\rceil-1.
\end{aligned}
\]

\end{teo}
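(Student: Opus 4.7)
The plan is to combine two already-established results: the earlier Favard spectral representation (Theorem \ref{theorem:spectral_representation_bis}) to secure the measures and biorthogonality, and the normality Theorem \ref{teo:normality in the step line } to secure the degree identities. The delicate point will be checking that the stronger hypothesis on $\mathcal A$ and $\mathcal B$ in the present statement feeds correctly into the total positivity required by Theorem \ref{teo:normality in the step line }, and handling the shift by $sI$.

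\smallskip

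First I would invoke Theorem \ref{theorem:spectral_representation_bis}. A unitriangular matrix in \scalebox{.7}{\UT}\hspace{-6pt}TP is automatically a nonnegative upper unitriangular matrix (diagonal entries are $1$ and above the diagonal the entries coincide with nontrivial $1\times 1$ minors, hence are positive); similarly for \scalebox{.65}{\LT}\hspace{-6pt}TP. Hence the hypotheses of Theorem \ref{theorem:spectral_representation_bis} are satisfied, and we immediately obtain the measures $\d\psi_{b,a}$ and the mixed multiple biorthogonality. No additional work is needed here.

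\smallskip

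Next, for the degree formula, I would apply Theorem \ref{teo:normality in the step line } to the shifted matrix $T+sI$. The recursion polynomials $A'_n(x)$ of $T+sI$ with initial conditions $A_0$ satisfy $A'_n(x)=A_n(x-s)$ (and likewise for the $B$'s), so $\deg A'_n=\deg A_n$ and $\deg B'_n=\deg B_n$; proving step line normality for $(T+sI,A_0,B_0)$ is therefore equivalent to proving it for $(T,A_0,B_0)$. Since $T+sI$ admits a PBF, Proposition \ref{pro:PBF_BTP} gives $T+sI\in\text{BTP}$, with nonzero extreme subdiagonal and superdiagonal coming from the positivity of the bidiagonal factors $L_p$ and $U_q$, so the band hypothesis of Theorem \ref{teo:normality in the step line } holds.

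\smallskip

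It remains to verify $A_0^{-1}\in\,$\scalebox{.7}{\UT}\hspace{-6pt}TP and $B_0^{-1}\in\,$\scalebox{.65}{\LT}\hspace{-6pt}TP. By Proposition \ref{pro:Lambda}, $\Lambda_p\in\,$\scalebox{.7}{\UT}\hspace{-6pt}TP and $\Upsilon_q\in\,$\scalebox{.65}{\LT}\hspace{-6pt}TP. Since by hypothesis $\mathcal A$ and $\mathcal B$ are totally positive unitriangular matrices of the corresponding triangular type, Theorem \ref{teo:semigroup} (the semigroup property of banded totally positive matrices) yields $A_0^{-1}=\Lambda_p\mathcal A\in\,$\scalebox{.7}{\UT}\hspace{-6pt}TP and $B_0^{-1}=\mathcal B\,\Upsilon_q\in\,$\scalebox{.65}{\LT}\hspace{-6pt}TP. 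With all three hypotheses of Theorem \ref{teo:normality in the step line } verified, we conclude that $(T+sI,A_0,B_0)$ is step line normal, hence so is $(T,A_0,B_0)$, giving the claimed degree identities.

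\smallskip

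I expect the main conceptual obstacle to be the inter-compatibility of the two layers of totally positive structure: one must recognize that the factorization $A_0^{-1}=\Lambda_p\mathcal A$ keeps us inside \scalebox{.7}{\UT}\hspace{-6pt}TP precisely because of the semigroup property established in Theorem \ref{teo:semigroup}, and that the shift $T\mapsto T+sI$ is transparent both for the PBF/BTP structure (since it does not alter off-diagonal entries) and for the degrees of the recursion polynomials (which are translation invariant). Once these two observations are in hand, the proof reduces to a direct concatenation of Theorems \ref{theorem:spectral_representation_bis} and \ref{teo:normality in the step line }.
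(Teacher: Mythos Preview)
Your proposal is correct and follows essentially the same approach as the paper, which simply cites Theorem \ref{theorem:spectral_representation_bis}, Proposition \ref{pro:Lambda}, and Theorems \ref{teo:semigroup} and \ref{teo:normality in the step line }. You additionally spell out two points the paper leaves implicit---that the stronger \scalebox{.7}{\UT}\hspace{-6pt}TP hypothesis on $\mathcal A$ implies the nonnegativity hypothesis of Theorem \ref{theorem:spectral_representation_bis}, and that the shift $T\mapsto T+sI$ preserves degrees via $A'_n(x)=A_n(x-s)$---both of which are correct and make your argument more complete than the paper's terse version.
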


\begin{proof}
It follows from the proof of  Theorem \ref{theorem:spectral_representation_bis} in \cite{BFM1}, Proposition \ref{pro:Lambda}, Theorems \ref{teo:semigroup} and  \ref{teo:normality in the step line }.
\end{proof}

\section*{Acknowledgments}

AB acknowledges Centre for Mathematics of the University of Coimbra 
(funded by the Portuguese Government through FCT/MCTES, \href{https://doi.org/10.54499/UIDB/00324/2020}{doi:10.54499/UIDB/00324/2020}.

AF acknowledges the CIDMA Center for Research and Development in Mathematics and Applications
(University of Aveiro) and the Portuguese Foundation for Science and Technology (FCT) for their support within
projects, 
\href{https://doi.org/10.54499/UIDB/04106/2020 }{doi:10.54499/UIDB/04106/2020} and   \href{https://doi.org/10.54499/UIDP/04106/2020 }{doi:10.54499/UIDP/04106/2020}.

MM acknowledges Spanish ``Agencia Estatal de Investigación'' research project [PID2021- 122154NB-I00], \emph{Ortogonalidad y Aproximación con Aplicaciones en Machine Learning y Teoría de la Probabilidad}.

	 
	\end{document}